\pgfplotsset{compat=1.14}
\newtheorem{thm}{Theorem}
\newtheorem*{thm*}{Theorem}
\newtheorem{lem}[thm]{Lemma}
\theoremstyle{remark}
\newtheorem{rem}{Remark}
\theoremstyle{definition}
\newtheorem{dfn}[thm]{Definition}
\newcommand{\R}{\mathbb{R}}
\newcommand{\N}{\mathbb{N}}
\newcommand{\supp}{{\rm supp\ }}
\newcommand{\vol}{\operatornamewithlimits{vol}}
\title{On the volume of unit balls\\ of finite-dimensional Lorentz spaces}
\author{Anna Dole\v{z}alov\'a\footnote{Department of Mathematical Analysis, Faculty of Mathematics and Physics,
Charles University, Sokolovsk\'a 83, 186 00 Prague, Czech Republic;
the research of this author was supported by the grant P201/18/00580S of the Grant Agency of the Czech Republic},
Jan Vyb\'iral\footnote{Dept. of Mathematics FNSPE, Czech Technical University in Prague, Trojanova 13, 12000 Prague, Czech Republic;
the research of this author was supported by the grant P201/18/00580S of the Grant Agency of the Czech Republic
and from European Regional Development Fund-Project ``Center for Advanced Applied Science'' (No. CZ.02.1.01/0.0/0.0/16\_019/0000778),
\texttt{jan.vybiral@fjfi.cvut.cz}}}
\begin{document}
\maketitle
\begin{abstract}
We study the volume of unit balls $B^n_{p,q}$ of finite-dimensional Lorentz sequence spaces $\ell_{p,q}^n.$
We give an iterative formula for ${\rm vol}(B^n_{p,q})$ for the weak Lebesgue spaces with $q=\infty$ and explicit formulas for $q=1$ and $q=\infty.$
We derive asymptotic results for the $n$-th root of ${\rm vol}(B^n_{p,q})$ and show that
$[{\rm vol}(B^n_{p,q})]^{1/n}\approx n^{-1/p}$ for all $0<p<\infty$ and $0<q\le\infty.$
We study further the ratio between the volume of unit balls of weak Lebesgue spaces
and the volume of unit balls of classical Lebesgue spaces. We conclude with an application of
the volume estimates and characterize the decay of the entropy numbers of the embedding of
the weak Lebesgue space $\ell_{1,\infty}^n$ into $\ell_1^n.$
\end{abstract}
{\bf Keywords:} Lorentz sequence spaces, weak Lebesgue spaces, entropy numbers, vo\-lu\-me estimates, (non-)convex bodies

\section{Introduction and main results}

It was observed already in the first half of the last century (cf. the interpolation theorem of Marcinkiewicz \cite{Mar}) that the scale
of Lebesgue spaces $L_p(\Omega)$, defined on a subset $\Omega\subset \R^n$, is not sufficient to describe the fine properties
of functions and operators. After the pioneering work of Lorentz \cite{Lor1,Lor2}, Hunt defined in \cite{Hunt1, Hunt2}
a more general scale of function spaces $L_{p,q}(\Omega)$, the so-called Lorentz spaces. This scale includes Lebesgue spaces
as a special case (for $p=q$) and Lorentz spaces have found applications in many areas of mathematics, including harmonic analysis (cf. \cite{Graf1,Graf2})
and the analysis of PDE's (cf. \cite{LR,Meyer}).

If $\Omega$ is an atomic measure space (with all atoms of the same measure), one arrives naturally at the definition of Lorentz spaces
on (finite or infinite) sequences. If $n$ is a positive integer and $0<p\le\infty$, then the Lebesgue $n$-dimensional space $\ell_p^n$ is
$\R^n$ equipped with the (quasi-)norm
\begin{equation*}
\|x\|_p=\begin{cases}\displaystyle \Bigl(\sum_{j=1}^n |x_j|^p\Bigr)^{1/p},&\quad\text{for}\ 0<p<\infty,\\
\displaystyle\max_{j=1,\dots,n}|x_j|,&\quad\text{for}\ p=\infty
\end{cases}
\end{equation*}
for every $x=(x_1,\dots,x_n)\in\R^n$. We denote by $B_p^n$ its unit ball
\begin{equation}\label{eq:defBp}
B_p^n=\{x\in\R^n:\|x\|_p\le 1\}.
\end{equation}
If $0<p,q\le\infty$, then the Lorentz space $\ell_{p,q}^n$ stands for $\R^n$ equipped with the \mbox{(quasi-)norm}
\begin{equation}\label{eq:defpq}
\|x\|_{p,q}=\|k^{\frac{1}{p}-\frac{1}{q}}x_k^*\|_q,
\end{equation}
where $x^*=(x_1^*,\dots,x_n^*)$ is the non-increasing rearrangement of $(|x_1|,\dots,|x_n|)$.
If $p=q$, then $\ell_{p,p}^n=\ell_p^n$ are again the Lebesgue sequence spaces.
If $q=\infty$, then the space $\ell_{p,\infty}^n$ is usually referred to as a weak Lebesgue space.

Similarly to \eqref{eq:defBp}, we denote by $B_{p,q}^n$ the unit ball of $\ell_{p,q}^n$, i.e. the set
\begin{equation}
B_{p,q}^n=\{x\in\R^n:\|x\|_{p,q}\le 1\}.
\end{equation}
Furthermore, $B^{n,+}_{p}$ (or $B_{p,q}^{n,+}$) 
will be the set of vectors from $B_p^n$ (or $B_{p,q}^{n}$) with all coordinates non-negative.

Lorentz spaces of (finite or infinite) sequences have been used extensively in different areas of mathematics.
They form a basis for many operator ideals of Pietsch, cf. \cite{Pietsch1, Pietsch2, Triebel2},
they play an important role in the interpolation theory, cf. \cite{BS,BL,Haroske,LiPe}, and their weighted counterparts are the main building blocks
of approximation function spaces, cf. \cite{ST,Triebel1}.
Weak Lebesgue sequence spaces (i.e. Lorentz spaces with $q=\infty$) were used by Cohen, Dahmen, Daubechies, and DeVore \cite{CDDD}
to characterize functions of bounded variation. Lorentz spaces further appear in approximation theory \cite{DeVore,DeLo,DePeTe} and signal processing \cite{IEEE1,IEEE2,FR}.

The volume of unit balls of classical Lebesgue sequence spaces $B_p^n$ is known since the times of Dirichlet \cite{Dirichlet},
who showed for $0<p\le\infty$ that
\begin{equation}\label{eq:volBp}
\vol(B_p^n)=2^n\cdot\frac{\Gamma\bigl(1+\frac{1}{p}\bigr)^n}{\Gamma\bigl(1+\frac{n}{p}\bigr)}.
\end{equation}
Here, $\vol(A)$ stands for the Lebesgue measure of a (measurable) set $A\subset\R^n$
and $\Gamma(x)=\int_0^\infty t^{x-1}e^{-t}dt$ is the Gamma function for $x>0$.
Since then, \eqref{eq:volBp} and its consequences
play an important role in many results about finite-dimensional Lebesgue spaces, cf. \cite{Pisier}.
Although many properties of Lorentz sequence spaces  
were studied in detail earlier (cf. \cite{LorTheo2, LorTheo1, BS,BL, LorTheo5, LorTheo3, LorTheo4}), there seems to be only very little known about
the volume of their unit balls. The aim of this work is to fill to some extent this gap.

We present two ways, which lead to recursive formulas for $\vol(B_{p,\infty}^n)$ if $0<p<\infty$.
The first one (cf. Theorem \ref{thm:ind:1})
\begin{equation}\label{eq:intro:1}
\vol(B^{n,+}_{p,\infty})=\sum_{j=1}^n (-1)^{j-1}{n\choose j}n^{-j/p}\vol(B^{n-j,+}_{p,\infty})
\end{equation}
is quite well suited for calculating $\vol(B_{p,\infty}^n)$ for moderate values of $n$
and we present also some numerical results on the behavior of this quantity for different values of $p$.
Although an explicit formula for $\vol(B_{p,\infty}^n)$ can be derived from \eqref{eq:intro:1}, cf. Theorem \ref{thm:ind:2},
due to its combinatorial nature it seems to be only of a limited practical use. In Section \ref{sec:integral}
we derive the same formula with the help of iterated multivariate integrals, very much in the spirit of the original proof of Dirichlet.

Surprisingly, a simple explicit formula can be given for $\vol(B_{p,1}^n)$ for the full range of $0<p\le\infty$. Indeed, we show in Theorem \ref{thm:q1} that
$$
\vol(B_{p,1}^n)=2^n\prod_{k=1}^n\frac{1}{\varkappa_p(k)},\quad \text{where}\quad \varkappa_p(k)=\sum_{j=1}^kj^{1/p-1}.
$$
If $p=1$, then $\varkappa_1(k)=k$ and this formula reduces immediately to the well-known relation $\vol(B_{1}^n)=2^n/n!.$

Using Stirling's formula, \eqref{eq:volBp} implies that $[\vol(B_p^n)]^{1/n}\approx n^{-1/p}$ for all $0<p<\infty$ with
the constants of equivalence independent of $n$. Using the technique of entropy numbers, we show in Theorem \ref{thm:asym:1} that essentially the same is true
for the whole scale of Lorentz spaces $\ell_{p,q}^n$ (with a remarkable exception for $p=\infty$, cf. Theorem \ref{thm:asym:inf1}).

It is a very well known fact (cf. Theorem \ref{thm:emb:1}) that $B_{p}^n\subset B_{p,\infty}^n$
for all $0<p\le\infty$ and it is a common folklore to consider the unit balls of weak Lebesgue spaces (i.e. Lorentz spaces with $q=\infty$)
as the ``slightly larger'' counterparts of the unit balls of Lebesgue spaces with the same summability parameter $p$.
This intuition seems to be further confirmed by Theorem \ref{thm:asym:1}, which shows that the quantities
$[\vol(B^n_p)]^{1/n}$ and $[\vol(B^n_{p,\infty})]^{1/n}$ are equivalent to each other with constants independent on $n$.
On the other hand, we show in Theorem \ref{thm:ratio} that $\vol(B^n_{p,\infty})/\vol(B^n_{p})$ grows exponentially
in $n$ at least for $p\le 2$. We conjecture (but it remains an open problem)
that the same is true for all $p<\infty.$

We conclude our work by considering the entropy numbers of the embeddings between Lorentz spaces
of finite dimension, which complements the seminal work of Edmunds and Netrusov \cite{EN}.
We characterize in Theorem \ref{thm:entropy}
the decay of the entropy numbers
$e_k(id:\ell_{1,\infty}^n\to \ell_{1}^n)$, which turns out to exhibit a rather
unusual behavior, namely
$$
e_k(id:\ell_{1,\infty}^n\to \ell_1^n)\approx\begin{cases} \log(1+n/k),\quad 1\le k\le n,\\
2^{-\frac{k-1}{n}},\quad k\ge n
\end{cases}
$$
with constants of equivalence independent of $k$ and $n$.
We see that after a logarithmic decay for $1\le k \le n$, the exponential decay in $k$
takes over for $k\ge n.$


\section{Recursive and explicit formulas}

In this section, we present different formulas for the volume of unit balls of Lorentz spaces
for two special cases, namely for the weak Lebesgue spaces with $q=\infty$ and for Lorentz spaces with $q=1.$
Surprisingly, different techniques have to be used in these two cases.

\subsection{Weak Lebesgue spaces}

We start by the study of weak Lebesgue spaces, i.e. the Lorentz spaces $\ell_{p,\infty}^n$. If $p=\infty$, then $\ell_{p,\infty}^n=\ell_\infty^n$.
Therefore, we restrict ourselves to $0<p<\infty$ in this section.

\subsubsection{Using the inclusion-exclusion principle}
In this section, we assume the convention
$$
\vol(B^{1,+}_{p,\infty})=\vol(B^{0,+}_{p,\infty})=1
$$
for every $0<p<\infty.$
\begin{thm}\label{thm:ind:1} Let $n\in\N$ and $0<p<\infty$. Then
\begin{equation}\label{eq:ind:1}
\vol(B^{n,+}_{p,\infty})=\sum_{j=1}^n (-1)^{j-1}{n\choose j}n^{-j/p}\vol(B^{n-j,+}_{p,\infty}).
\end{equation}
\end{thm}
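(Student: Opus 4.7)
The plan is to derive \eqref{eq:ind:1} by applying the inclusion-exclusion principle to a natural cover of $B^{n,+}_{p,\infty}$. First I would rewrite membership in rearrangement-free form: for $x\in\R^n_+$, one has $x\in B^{n,+}_{p,\infty}$ iff $x^*_k\le k^{-1/p}$ for every $k=1,\dots,n$, equivalently $|\{i:x_i>k^{-1/p}\}|\le k-1$ for every such $k$. The tightest of these constraints, $k=n$, forces the minimum of the coordinates to be at most $n^{-1/p}$. Setting $F_i:=\{x\in B^{n,+}_{p,\infty}:x_i\le n^{-1/p}\}$, this yields the cover $B^{n,+}_{p,\infty}=\bigcup_{i=1}^n F_i$ to which inclusion-exclusion will be applied.

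The central step, and the part I expect to be the main obstacle, is a product decomposition of the intersections $\bigcap_{i\in S}F_i$ for $S\subset\{1,\dots,n\}$ with $|S|=j$. Splitting coordinates as $x=(y,z)$ with $y=(x_i)_{i\notin S}$ and $z=(x_i)_{i\in S}$, I would prove that $x\in\bigcap_{i\in S}F_i$ iff $z\in[0,n^{-1/p}]^j$ and $y\in B^{n-j,+}_{p,\infty}$, so the intersection has volume $n^{-j/p}\vol(B^{n-j,+}_{p,\infty})$. One direction is easy: if $x\in B^{n,+}_{p,\infty}$ then for $l\le n-j$ one has $|\{i:y_i>l^{-1/p}\}|\le|\{i:x_i>l^{-1/p}\}|\le l-1$, whence $y\in B^{n-j,+}_{p,\infty}$. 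The converse needs a case distinction in the threshold-counting argument. Given $y\in B^{n-j,+}_{p,\infty}$ and $z\in[0,n^{-1/p}]^j$, I must bound $|\{i:x_i>k^{-1/p}\}|$ by $k-1$ for every $k\le n$. Since $n^{-1/p}\le k^{-1/p}$, no entry of $z$ exceeds $k^{-1/p}$, so the count equals $|\{i:y_i>k^{-1/p}\}|$; this is at most $k-1$ when $k\le n-j$ by assumption on $y$, and trivially at most $n-j\le k-1$ when $k>n-j$.

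Once the product decomposition is in place, the conclusion is a direct application of inclusion-exclusion. By the symmetry in coordinates, each of the $\binom{n}{j}$ subsets $S$ of size $j$ contributes the same volume, so
\begin{equation*}
\vol(B^{n,+}_{p,\infty})=\vol\Bigl(\bigcup_{i=1}^n F_i\Bigr)=\sum_{j=1}^n(-1)^{j-1}\binom{n}{j}n^{-j/p}\vol(B^{n-j,+}_{p,\infty}),
\end{equation*}
with the convention $\vol(B^{0,+}_{p,\infty})=1$ covering the $j=n$ term. This is precisely \eqref{eq:ind:1}. As a sanity check, for $n=1$ the formula correctly returns $\vol(B^{1,+}_{p,\infty})=1$, and for $n=2$, $p=1$ it gives $2\cdot\tfrac12-\tfrac14=\tfrac34$, matching a direct computation of $\vol(\{x\in[0,1]^2:\min(x_1,x_2)\le 1/2\})$.
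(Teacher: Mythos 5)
Your proof is correct and follows essentially the same route as the paper: cover $B^{n,+}_{p,\infty}$ by the sets where a fixed coordinate is at most $n^{-1/p}$, identify each intersection as a product of a cube $[0,n^{-1/p}]^{|S|}$ with $B^{n-|S|,+}_{p,\infty}$, and apply inclusion--exclusion. If anything, you are more careful than the paper in verifying both directions of the product decomposition (the converse, that any such product point lies in $B^{n,+}_{p,\infty}$, is needed for the volume identity to be an equality and is only implicit in the paper's argument).
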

\begin{proof} For $1\le k \le n$, we denote $A_k=\{x\in B^{n,+}_{p,\infty}:x_k\le n^{-1/p}\}$.
If $x\in B^{n,+}_{p,\infty}$, then at least one of the coordinates of $x$ must be smaller than or equal to $n^{-1/p}$. Therefore
$$
B^{n,+}_{p,\infty}=\bigcup_{j=1}^n A_j.
$$
For a non-empty index set $K\subset \{1,\dots,n\}$, we denote
$$
A_K=\bigcap_{k\in K}A_k=\{x\in B_{p,\infty}^{n,+}:x_k\le n^{-1/p}\ \text{for all}\ k\in K\}.
$$
If we denote by $x_{K^c}$ the restriction of $x$ onto $K^c=\{1,\dots,n\}\setminus K$, then the $j^{\rm th}$
largest coordinate of $x_{K^c}$ can be at most $j^{-1/p}$, i.e. $x_{K^c}\in B^{n-|K|,+}_{p,\infty}$.
Here, $|K|$ stands for the number of elements in $K$.
We therefore obtain
\begin{equation}\label{eq:volK}
\vol(A_K)=\Bigl(\prod_{k\in K}n^{-1/p}\Bigr)\cdot \vol(B^{n-|K|,+}_{p,\infty})=n^{-|K|/p}\cdot \vol(B^{n-|K|,+}_{p,\infty}).
\end{equation}

Finally, we insert \eqref{eq:volK} into the inclusion-exclusion principle and obtain
\begin{align*}
\vol(B^{n,+}_{p,\infty})&=\sum_{\emptyset\not=K\subset\{1,\dots,n\}}(-1)^{|K|-1}\vol(A_K)\\
&=\sum_{\emptyset\not=K\subset\{1,\dots,n\}}(-1)^{|K|-1}n^{-|K|/p}\vol(B^{n-|K|,+}_{p,\infty})\\
&=\sum_{j=1}^n (-1)^{j-1}{n\choose j}n^{-j/p}\vol(B^{n-j,+}_{p,\infty}).
\end{align*}
\end{proof}
The relation \eqref{eq:ind:1} is already suitable for calculation of $\vol(B^{n}_{p,\infty})$ for moderate values of $n$, cf. Table \ref{table_infty}.
Let us remark, that $\vol(B^n_{1,\infty})$ is maximal for $n=4$ and $\vol(B^n_{2,\infty})$ attains its maximum at $n=18.$

\begin{table}[h t p]\centering
\pgfplotstableread[col sep = semicolon]{q1prehled15.txt}\loadedtable
\setlength{\tabcolsep}{8pt}
\pgfplotstabletypeset[
                precision=3,
                every head row/.style={
                        before row=\toprule,after row=\midrule},
                        every last row/.style={
                        after row=\bottomrule},
                columns/n/.style={int detect,
                        column name={$n$}},
                columns/pul/.style={sci, sci zerofill, sci sep align,
                        column name={$p=1/2$}},
                columns/jedna/.style={sci, sci zerofill, sci sep align,
                        column name={$p=1$}},
                columns/dva/.style={sci, sci zerofill, sci sep align,
                        column name={$p=2$}},
                columns/sto/.style={sci, sci zerofill, sci sep align,
                        column name={$p=100$}},
                        ]\loadedtable
\caption{$\text{vol}(B^n_{p,\infty})$ for dimensions up to 15}
\label{table_infty}
\end{table}

Next we exploit Theorem \ref{thm:ind:1} to give a certain explicit result about the volume of unit balls of weak Lebesgue spaces.
For this, we denote by ${\bf K}_n$ the set of integer vectors of finite length $k=(k_1,\dots,k_j)$ with positive
coordinates $k_1,\dots,k_j$, which sum up to $n$. We denote by $\ell(k)=j$ the length of $k\in{\bf K}_n.$
Similarly, we denote by ${\bf M}_n$ the set of all increasing sequences $m=(m_0,\dots,m_j)$
which grow from zero to $n$, i.e. with $0=m_0<m_1<\dots<m_j=n.$ The quantity $\ell(m)=j$ is again the length of $m\in{\bf M}_n$.
Hence,
\begin{align*}
{\bf K}_n&:=\{k=(k_1,\dots,k_j):k_i\in\N, \sum_{i=1}^j k_i=n\},\\
{\bf M}_n&:=\{m=(m_0,\dots,m_j):m_i\in\N_0,\ 0=m_0<m_1<\dots<m_j=n\}.
\end{align*}
For $k\in{\bf K}_n$, we also write
$$
{n\choose k}={n\choose k_1,\dots,k_{\ell(k)}}=\frac{n!}{k_1!\dots k_{\ell(k)}!}
$$
The explicit formula for $\vol(B_{p,\infty}^n)=2^n\vol(B_{p,\infty}^{n,+})$ is then presented in the following theorem.
\begin{thm}\label{thm:ind:2} Let $0<p<\infty$ and $n\in\N.$ Then
\begin{align}
\notag \vol(B^{n,+}_{p,\infty})&=\sum_{k\in {\bf K}_n} (-1)^{n+\ell(k)}{n\choose k}
\prod_{l=1}^{\ell(k)}\Bigl(n-\sum_{i=1}^{l-1}k_i\Bigr)^{-k_l/p}\\
\label{eq:ind:2}&=n!\sum_{m\in {\bf M}_n} (-1)^{n+\ell(m)}
\prod_{l=0}^{\ell(m)-1} \frac{(n-m_l)^{-(m_{l+1}-m_{l})/p}}{(m_{l+1}-m_{l})!}.
\end{align}
\end{thm}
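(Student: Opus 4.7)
The plan is to derive the first formula by induction on $n$ from the recursion \eqref{eq:ind:1} of Theorem \ref{thm:ind:1}, and then to obtain the second formula from the first by a trivial reindexing. The base case $n=1$ amounts to checking that the sum over ${\bf K}_1$ reduces to the single term coming from $k=(1)$, which equals $1=\vol(B^{1,+}_{p,\infty})$ (by the convention preceding Theorem \ref{thm:ind:1}).

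For the inductive step, I would assume the first equality of \eqref{eq:ind:2} for all dimensions strictly less than $n$ and substitute this formula for $\vol(B^{n-j,+}_{p,\infty})$ into \eqref{eq:ind:1}. The key observation is the bijection
$$
\{(j,k'):1\le j\le n,\ k'\in {\bf K}_{n-j}\}\longleftrightarrow {\bf K}_n,\qquad (j,k'=(k'_1,\dots,k'_{\ell(k')}))\mapsto k=(j,k'_1,\dots,k'_{\ell(k')}),
$$
i.e.\ a composition of $n$ is uniquely determined by its first part $j=k_1$ and the remaining composition $k'\in {\bf K}_{n-j}$ (with $k'$ empty if $j=n$, which requires interpreting $\vol(B^{0,+}_{p,\infty})=1$ as the empty-composition contribution). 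Under this bijection $\ell(k)=\ell(k')+1$, so the sign combines as $(-1)^{j-1}\cdot(-1)^{n-j+\ell(k')}=(-1)^{n+\ell(k)}$; the multinomial coefficients telescope via $\binom{n}{j}\binom{n-j}{k'_1,\dots,k'_{\ell(k')}}=\binom{n}{k_1,\dots,k_{\ell(k)}}$; and the prefactor $n^{-j/p}$ in \eqref{eq:ind:1} fills exactly the $l=1$ slot of the product, since $n-\sum_{i<1}k_i=n$ and $k_1=j$. Summing over all $(j,k')$ reproduces the sum over ${\bf K}_n$ on the right-hand side of \eqref{eq:ind:2}.

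The second equality follows from the obvious bijection ${\bf K}_n\to {\bf M}_n$ sending $k=(k_1,\dots,k_j)$ to its partial sums $m_0=0,\ m_l=k_1+\cdots+k_l$ for $1\le l\le j$. Under this bijection $\ell(m)=\ell(k)$, $k_l=m_l-m_{l-1}$, and $\sum_{i<l}k_i=m_{l-1}$, so that
$$
\binom{n}{k_1,\dots,k_{\ell(k)}}=\frac{n!}{\prod_{l=1}^{\ell(k)}k_l!}=n!\prod_{l=0}^{\ell(m)-1}\frac{1}{(m_{l+1}-m_l)!},
$$
and the product $\prod_{l=1}^{\ell(k)}\bigl(n-\sum_{i<l}k_i\bigr)^{-k_l/p}$ becomes $\prod_{l=0}^{\ell(m)-1}(n-m_l)^{-(m_{l+1}-m_l)/p}$ after the shift $l\mapsto l+1$.

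I do not anticipate a real obstacle here: the argument is essentially an unrolling of the linear recurrence \eqref{eq:ind:1}. The only points requiring care are the sign bookkeeping and the collapse of the iterated binomial coefficients into a single multinomial coefficient, both of which are routine once the correct bijection between ${\bf K}_n$ and the pairs $(j,k')$ is written down.
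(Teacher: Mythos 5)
Your proposal is correct and follows essentially the same route as the paper: induction on $n$ via the recursion \eqref{eq:ind:1}, identifying the pair $(j,k')$ with the composition $(j,k'_1,\dots,k'_{\ell(k')})\in{\bf K}_n$, and passing to ${\bf M}_n$ by the partial-sums bijection. The only difference is cosmetic ordering — the paper establishes the ${\bf K}_n$--${\bf M}_n$ identity first and then runs the induction — and your sign and multinomial bookkeeping matches the paper's.
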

\begin{proof}
First, we prove the second identity in \eqref{eq:ind:2}. Indeed, the mapping $k=(k_1,\dots,k_j)\to (0,k_1,k_1+k_2,\dots,\sum_{i=1}^jk_i)$
maps one-to-one ${\bf K}_n$ onto ${\bf M}_n$, preserving also the length of the vectors.

Next, we proceed by induction to show the first identity of \eqref{eq:ind:2}. For that sake, we denote ${\bf K}_0=\{0\}$ with $\ell(0)=0$.
With this convention, \eqref{eq:ind:2}
is true for both $n=0$ and $n=1$, where both the sides of \eqref{eq:ind:2} are equal to one.
The rest follows from \eqref{eq:ind:1}.
Indeed, we obtain
\begin{align*}
\vol(B^{n,+}_{p,\infty})&=\sum_{j=1}^n (-1)^{j-1}{n\choose j}n^{-j/p}\vol(B^{n-j,+}_{p,\infty})\\
&=\sum_{j=1}^n (-1)^{j-1}{n\choose j}n^{-j/p}\sum_{k\in{\bf K}_{n-j}}(-1)^{n-j+\ell(k)}{n-j\choose k}\prod_{l=1}^{\ell(k)}\Bigl(n-j-\sum_{i=1}^{l-1}k_i\Bigr)^{-k_l/p}\\
&=\sum_{j=1}^n \sum_{k\in{\bf K}_{n-j}} (-1)^{n+\ell(k)-1}{n\choose j}{n-j\choose k}n^{-j/p}\prod_{l=1}^{\ell(k)}\Bigl(n-j-\sum_{i=1}^{l-1}k_i\Bigr)^{-k_l/p}\\
&=\sum_{\nu\in{\bf K}_{n}} (-1)^{n+\ell(\nu)}{n\choose \nu}n^{-\nu_1/p}\prod_{l=1}^{\ell(\nu)-1}\Bigl(n-\sum_{i=1}^{l}\nu_i\Bigr)^{-\nu_{l+1}/p}\\
&=\sum_{\nu\in{\bf K}_{n}} (-1)^{n+\ell(\nu)}{n\choose \nu}\prod_{l=1}^{\ell(\nu)}
\Bigl(n-\sum_{i=1}^{l-1}\nu_i\Bigr)^{-\nu_{l}/p},
\end{align*}
where we identified the pair $(j,k)$ with $1\le j\le n$ and $k=(k_1,\dots,k_{\ell(k)})\in {\bf K}_{n-j}$  with $\nu=(j,k_1,\dots,k_{\ell(k)})\in{\bf K}_n$.
If $j=n$, then the pair $(n,0)$ is identified with $\nu=(n)$. In any case, $\ell(\nu)=\ell(k)+1.$
\end{proof}

\subsubsection{Integral approach}\label{sec:integral}

The result of Theorem \ref{thm:ind:2} can be obtained also by an iterative evaluation of integrals, resembling
the approach of the original work of Dirichlet \cite{Dirichlet}.
To begin with, we define a scale of expressions which for some specific choice of parameters lead to a formula for $\vol(B^n_{p,\infty})$.
\begin{dfn}\label{def:V} Let $m\in\N_0$ and $n\in\N$. Let $a\in\R^n$ be a decreasing positive vector, i.e. $a=(a_1,\dots,a_n)$
with $a_1>a_2>\dots>a_n>0.$ We denote
\begin{equation}\label{eq:def:V}
V^{(m)}(n,a)=\int_0^{a_n}\int_{x_n}^{a_{n-1}}\dots\int_{x_2}^{a_1}x_1^mdx_1\dots dx_{n-1}dx_n.
\end{equation}
\end{dfn}
The domain of integration in \eqref{eq:def:V} is defined by the following set of conditions
\begin{align*}
0\le x_n\le a_n,\quad x_n\le x_{n-1}\le a_{n-1},\quad \dots,\quad x_2\le x_1\le a_1,
\end{align*}
which can be reformulated also as
\begin{align*}
0\le x_n\le x_{n-1}\le \dots\le x_2\le x_1\quad\text{and}\quad x_j\le a_j\ \text{for all}\ j=1,\dots,n.
\end{align*}
Hence, the integration in \eqref{eq:def:V} goes over the cone of non-negative non-increasing vectors in $\R^n$ intersected with the set
$\{x\in\R^n:x^*_j\le a_j\ \text{for}\ j=1,\dots,n\}.$ If we set $a^{(p)}=(a_1^{(p)},\dots,a_n^{(p)})$ with $a_j^{(p)}=j^{-1/p}$ for $0<p<\infty$ and $1\le j\le n$,
this set coincides with $B^{n,+}_{p,\infty}$. Finally, considering all the possible reorderings of $x$, we get
\begin{equation}\label{eq:BV1}
\vol(B^{n,+}_{p,\infty})=n!\cdot V^{(0)}(n,a^{(p)}).
\end{equation}
In what follows, we simplify the notation by assuming $V^{(0)}(0,\emptyset)=1.$
The integration in \eqref{eq:def:V} leads to the following recursive formula for $V^{(m)}(n,a)$.
\begin{lem}\label{lem:ind:V1} Let $m\in\N_0$, $n\in\N$ and $a\in\R^n$ with $a_1>a_2>\dots>a_n>0.$ Then
\begin{equation}\label{eq:ind:V1}
V^{(m)}(n,a)=\sum_{i=1}^{n}(-1)^{i+1}\frac{a_i^{m+i}m!}{(m+i)!}V^{(0)}(n-i,(a_{i+1},\dots,a_n)).
\end{equation}
\end{lem}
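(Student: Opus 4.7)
The plan is to prove the formula by induction on $n$, at each step evaluating the innermost integral (in the variable $x_1$) explicitly. The base case $n=1$ is immediate from the definition: $V^{(m)}(1,(a_1)) = \int_0^{a_1} x_1^m\,dx_1 = a_1^{m+1}/(m+1)$, which is precisely the single $i=1$ term on the right-hand side of \eqref{eq:ind:V1}, using the convention $V^{(0)}(0,\emptyset)=1$.

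For the inductive step, I would evaluate
$$\int_{x_2}^{a_1} x_1^m\,dx_1 = \frac{a_1^{m+1}}{m+1} - \frac{x_2^{m+1}}{m+1}$$
and substitute this into the definition \eqref{eq:def:V} of $V^{(m)}(n,a)$. Splitting the resulting iterated integral into two pieces yields the two-term recurrence
$$V^{(m)}(n,a) = \frac{a_1^{m+1}}{m+1}\,V^{(0)}(n-1,a') - \frac{1}{m+1}\,V^{(m+1)}(n-1,a'),$$
where $a' = (a_2,\dots,a_n)$. The first summand contributes a constant times the iterated integral of $1$ over the remaining $n-1$ variables; the second summand is again an object of the form $V^{(\cdot)}(\cdot,\cdot)$, with the exponent raised from $m$ to $m+1$ and one variable less.

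To close the induction, I would apply the inductive hypothesis to $V^{(m+1)}(n-1,a')$, obtaining a sum indexed by $j=1,\dots,n-1$, and then re-index with $i=j+1$. The simplifications $\tfrac{(m+1)!}{(m+1)(m+i)!}=\tfrac{m!}{(m+i)!}$ together with $-(-1)^{j+1}=(-1)^{i+1}$ turn this into the contributions of $i=2,\dots,n$ in \eqref{eq:ind:V1}, while the leading piece $\frac{a_1^{m+1}}{m+1}V^{(0)}(n-1,a')$ supplies exactly the missing $i=1$ term.

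The main obstacle is purely bookkeeping: tracking the index shift between $j$ and $i$, verifying the factorial and sign identities, and making sure that the convention $V^{(0)}(0,\emptyset)=1$ closes the induction at the innermost step without generating spurious boundary terms of the form $V^{(\ell)}(0,\emptyset)$ with $\ell>0$.
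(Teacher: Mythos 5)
Your proposal is correct and follows essentially the same route as the paper: evaluate the innermost integral to obtain the two-term recurrence $V^{(m)}(n,a)=\frac{a_1^{m+1}}{m+1}V^{(0)}(n-1,a')-\frac{1}{m+1}V^{(m+1)}(n-1,a')$, then induct on $n$ with base case $n=1$, applying the hypothesis to $V^{(m+1)}(n-1,a')$ and re-indexing. The sign and factorial identities you cite are exactly the ones used in the paper's computation.
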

\begin{proof}
First, we obtain
\begin{align*}
V^{(m)}(n,a)&=\int_0^{a_n}\int_{x_n}^{a_{n-1}}\dots\int_{x_2}^{a_1}x_1^mdx_1\dots dx_{n-1}dx_n\\
&=\frac{1}{m+1}\int_0^{a_n}\int_{x_n}^{a_{n-1}}\dots\int_{x_3}^{a_2}(a_1^{m+1}-x_2^{m+1})dx_2\dots dx_{n-1}dx_n\\
&=\frac{a_1^{m+1}}{m+1}V^{(0)}(n-1,(a_2,\dots,a_n))-\frac{1}{m+1}V^{(m+1)}(n-1,(a_2,\dots,a_n)).
\end{align*}
The proof of \eqref{eq:ind:V1} now follows by induction over $n$. For $n=1$, $V^{(m)}(1,(a_1))=\frac{a_1^{m+1}}{m+1}$,
which is in agreement with \eqref{eq:ind:V1}. To simplify the notation later on,
we write $a_{[k,l]}=(a_k,\dots,a_l)$ for every $1\le k\le l\le n$.
We assume, that \eqref{eq:ind:V1} holds for $n-1$ and calculate
\begin{align*}
V^{(m)}(n,a)&=\frac{a_1^{m+1}}{m+1}V^{(0)}(n-1,a_{[2,n]})-\frac{1}{m+1}V^{(m+1)}(n-1,(a_{[2,n]}))\\
&=\frac{a_1^{m+1}}{m+1}V^{(0)}(n-1,a_{[2,n]})-
\frac{1}{m+1}\sum_{i=1}^{n-1}(-1)^{i+1}\frac{a_{i+1}^{m+1+i}(m+1)!}{(m+1+i)!}V^{(0)}(n-1-i,(a_{[i+2,n]}))\\
&=\frac{a_1^{m+1}}{m+1}V^{(0)}(n-1,a_{[2,n]})+
\sum_{j=2}^{n}(-1)^{j+1}\frac{a_{j}^{m+j}m!}{(m+j)!}V^{(0)}(n-j,(a_{[j+1,n]}))\\
&=\sum_{j=1}^{n}(-1)^{j+1}\frac{a_{j}^{m+j}m!}{(m+j)!}V^{(0)}(n-j,(a_{[j+1,n]})),
\end{align*}
which finishes the proof of \eqref{eq:ind:V1}.
\end{proof}
Lemma \ref{lem:ind:V1} allows for a different proof of Theorem \ref{thm:ind:2}.
\begin{proof}[Alternative proof of Theorem \ref{thm:ind:2}]
By \eqref{eq:BV1}, we need to calculate $V^{(0)}(n,a)$ and then substitute $a=a^{(p)}.$
We show by induction that
\begin{equation}\label{eq:BV2}
V^{(0)}(n,a)=\sum_{m\in{\bf M}_n}(-1)^{n+\ell(m)}\prod_{l=0}^{\ell(m)-1}\frac{a_{n-m_l}^{m_{l+1}-m_l}}{(m_{l+1}-m_l)!}.
\end{equation}
If $n=1$, then both sides of \eqref{eq:BV2} are equal to $a_1.$ For general $n$, we obtain by \eqref{eq:ind:V1}
\begin{align*}
V^{(0)}(n,a)&=\sum_{i=1}^{n}(-1)^{i+1}\frac{a_i^{i}}{i!}V^{(0)}(n-i,(a_{i+1},\dots,a_n))\\
&=\sum_{i=1}^{n}(-1)^{i+1}\frac{a_i^{i}}{i!} \sum_{m\in{\bf M}_{n-i}}(-1)^{n-i+\ell(m)}\prod_{l=0}^{\ell(m)-1}\frac{a_{n-i-m_l+i}^{m_{l+1}-m_l}}{(m_{l+1}-m_l)!}\\
&=\sum_{i=1}^{n}\sum_{m\in{\bf M}_{n-i}}(-1)^{n+1+\ell(m)}\frac{a_i^{i}}{i!} \prod_{l=0}^{\ell(m)-1}\frac{a_{n-m_l}^{m_{l+1}-m_l}}{(m_{l+1}-m_l)!}\\
&=\sum_{\mu\in{\bf M}_n} (-1)^{n+\ell(\mu)} \frac{a_{n-\mu_{\ell(\mu)-1}}^{\mu_{\ell(\mu)}-\mu_{\ell(\mu)-1}}}{(\mu_{\ell(\mu)}-\mu_{\ell(\mu)-1})!}
\prod_{l=0}^{\ell(\mu)-2}\frac{a_{n-\mu_l}^{\mu_{l+1}-\mu_l}}{(\mu_{l+1}-\mu_l)!}\\
&=\sum_{\mu\in{\bf M}_n} (-1)^{n+\ell(\mu)} \prod_{l=0}^{\ell(\mu)-1}\frac{a_{n-\mu_l}^{\mu_{l+1}-\mu_l}}{(\mu_{l+1}-\mu_l)!},
\end{align*}
where we identified the pair $(i,m)$ with $1\le i \le n$ and $m=(m_0,\dots,m_{\ell(m)})\in{\bf M}_{n-i}$ with
$\mu=(\mu_0,\dots,\mu_{\ell(\mu)})=(m_0,\dots,m_{\ell(m)},n)\in{\bf M}_n$. Hence, $\ell(\mu)=\ell(m)+1.$
\end{proof}

\subsection{Lorentz spaces with $q=1$}
We give an explicit formula for $\vol(B_{p,1}^n)$, which takes a surprisingly simple form.
The approach is based on the polarization. Recall, that for $0<p\le \infty$, the \mbox{(quasi-)norm}
$\|x\|_{p,1}$ is defined as
$$
\|x\|_{p,1}=\sum_{k=1}^n k^{1/p-1}x_k^*.
$$
\begin{thm}\label{thm:q1} Let $0<p\le \infty$. Then
$$
\vol(B_{p,1}^n)=2^n\prod_{k=1}^n\frac{1}{\varkappa_p(k)},\quad \text{where}\quad \varkappa_p(k)=\sum_{j=1}^kj^{1/p-1}.
$$
\end{thm}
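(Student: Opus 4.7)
The plan is to reduce the computation of $\vol(B^n_{p,1})$ to the volume of a simplex via two changes of variables: one that exploits the symmetries of the quasi-norm $\|\cdot\|_{p,1}$, and one that ``polarizes'' the decreasing cone into the positive orthant.

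First I would use the fact that $\|x\|_{p,1}$ depends only on $|x_1|,\dots,|x_n|$ via the non-increasing rearrangement. Reflecting sign by sign gives
$$\vol(B^n_{p,1})=2^n\vol(B^{n,+}_{p,1}),$$
and splitting $B^{n,+}_{p,1}$ along the $n!$ orderings of coordinates gives
$$\vol(B^{n,+}_{p,1})=n!\cdot \vol(D),\qquad D=\Bigl\{x\in\R^n_{+}:x_1\ge\dots\ge x_n\ge 0,\ \sum_{k=1}^n k^{1/p-1}x_k\le 1\Bigr\},$$
since on the decreasing cone $x^*_k=x_k$.

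The main step is the polarization. I would substitute $y_k=x_k-x_{k+1}$ for $1\le k\le n-1$ and $y_n=x_n$, so that the decreasing cone $\{x_1\ge\dots\ge x_n\ge 0\}$ is mapped bijectively to $\R^n_+$ by a unit-triangular linear map (Jacobian $1$), with inverse $x_k=\sum_{j=k}^n y_j$. By swapping the order of summation,
$$\sum_{k=1}^n k^{1/p-1}x_k=\sum_{k=1}^n k^{1/p-1}\sum_{j=k}^n y_j=\sum_{j=1}^n y_j\sum_{k=1}^j k^{1/p-1}=\sum_{j=1}^n \varkappa_p(j)\,y_j,$$
so $D$ is carried isometrically (in Lebesgue measure) onto the weighted simplex $\{y\in\R^n_+:\sum_{j=1}^n \varkappa_p(j)y_j\le 1\}$.

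Finally, rescaling $z_j=\varkappa_p(j)y_j$ (which has positive Jacobian $\prod_{j=1}^n\varkappa_p(j)$) reduces the problem to the standard simplex $\{z\in\R^n_+:\sum z_j\le 1\}$ of volume $1/n!$, giving
$$\vol(D)=\frac{1}{n!}\prod_{k=1}^n\frac{1}{\varkappa_p(k)}.$$
Combining the three steps yields the claimed formula. No step presents a serious obstacle; the only point that needs care is verifying the Abel-type identity above and making sure the polarization map really sends the decreasing cone bijectively onto $\R^n_+$ with unit Jacobian, since this is where the miraculous simplification (and the ``surprising simplicity'' noted by the authors) occurs.
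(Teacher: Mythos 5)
Your argument is correct, and it takes a genuinely different route from the paper. The paper's proof is analytic: it uses the layer-cake identity $\int_{\R^n}f(\|x\|_{p,1})\,dx=-\vol(B^n_{p,1})\int_0^\infty t^nf'(t)\,dt$ with $f(t)=e^{-t}$, and then evaluates $\int_{\R^n}e^{-\|x\|_{p,1}}dx$ by an iterated-integral recursion $A(n,p,t)=\int_t^\infty e^{-n^{1/p-1}x_n}A(n-1,p,x_n)\,dx_n$ over the decreasing cone, solved by induction. You instead stay entirely geometric: after the standard reductions $\vol(B^n_{p,1})=2^n\,n!\,\vol(D)$, the unit-triangular substitution $y_k=x_k-x_{k+1}$ (Jacobian $1$) maps the decreasing cone onto $\R^n_+$, and the Abel summation $\sum_k k^{1/p-1}\sum_{j\ge k}y_j=\sum_j\varkappa_p(j)y_j$ identifies $D$ with the weighted simplex $\{y\in\R^n_+:\sum_j\varkappa_p(j)y_j\le1\}$ of volume $(n!\prod_j\varkappa_p(j))^{-1}$. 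All steps check out (the weights $k^{1/p-1}$ are positive for every $0<p\le\infty$, so nothing degenerates at $p=\infty$). Your version is shorter and arguably more illuminating: it shows that $B^{n,+}_{p,1}$ intersected with the decreasing cone is literally a linear image of the standard simplex, which explains the ``surprisingly simple'' product formula. What the paper's Laplace-transform approach buys in exchange is a template that formally extends to other values of $q$ (as the authors remark after the theorem), where your change of variables no longer linearizes the constraint; the price is that for $q\ne1$ the resulting integral resists explicit evaluation. Note also that the word ``polarization'' in the paper refers to the $\int f(\|x\|)\,dx$ identity, not to your cone-to-orthant map, so the two proofs share a name but not a mechanism.
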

\begin{proof}
Let $f:[0,\infty)\to \R$ be a smooth non-negative function with a sufficient decay at infinity (later on, we will just choose $f(t)=e^{-t}$).
Then
\begin{align}
\notag\int_{\R^n}f(\|x\|_{p,1})dx&=-\int_{\R^n}\int_{\|x\|_{p,1}}^\infty f'(t)dtdx=-\int_0^\infty f'(t)\int_{x:\|x\|_{p,1}\le t}1dx dt\\
\label{eq:q1:1}&=-\int_0^\infty f'(t)\vol(\{x:\|x\|_{p,1}\le t\})dt\\
\notag&=-\int_0^\infty t^n f'(t)\vol(\{x:\|x\|_{p,1}\le 1\})dt=-\vol(B^n_{p,1})\int_0^\infty t^n f'(t)dt.
\end{align}
For the choice $f(t)=e^{-t}$, we get
\begin{equation}\label{eq:q1:2}
-\int_0^\infty t^n f'(t)dt=\int_0^\infty t^ne^{-t}dt=\Gamma(n+1)=n!.
\end{equation}
It remains to evaluate
\begin{align*}
I^n_{p}=\int_{\R^n}\exp(-\|x\|_{p,1})dx=\int_{\R^n}\exp\Bigl(-\sum_{k=1}^n k^{1/p-1}x_k^*\Bigr)dx
=2^n\cdot n!\cdot\int_{{\mathcal C}_+^n}\exp\Bigl(-\sum_{k=1}^n k^{1/p-1}x_k\Bigr)dx,
\end{align*}
where
$$
{\mathcal C}_+^n=\Bigl\{x\in\R^n:x_1\ge x_2\ge\dots\ge x_n\ge 0\Bigr\}.
$$
We denote for $t\ge 0$, $0<p\le\infty$, and $n\in \N$
$$
A(n,p,t)=\int_{{\mathcal C}_{t,+}^n}\exp\Bigl(-\sum_{k=1}^n k^{1/p-1}x_k\Bigr)dx,
$$
where ${\mathcal C}_{t,+}^n=\Bigl\{x\in\R^n:x_1\ge x_2\ge\dots\ge x_n\ge t\Bigr\}$, i.e. $I_p^n=2^n\cdot n!\cdot A(n,p,0).$
We observe that
\begin{equation}\label{eq:int:1}
A(1,p,t)=\int_t^\infty e^{-u}du=e^{-t}
\end{equation}
and
\begin{align}
\notag A(n,p,t)&=\int_{t}^\infty \exp\Bigl({-n^{1/p-1}x_n}\Bigr)\int_{x_n}^\infty \exp\Bigl({-(n-1)^{1/p-1}x_{n-1}}\Bigr)\dots \int_{x_2}^\infty \exp({-x_1})dx_1\dots dx_{n-1}dx_n\\
\label{eq:int:2}
&=\int_{t}^\infty \exp\Bigl({-n^{1/p-1}x_n}\Bigr)A(n-1,p,x_n)dx_n.
\end{align}
Combining \eqref{eq:int:1} and \eqref{eq:int:2}, we prove by induction
\begin{align*}
A(n,p,t)=\prod_{k=1}^n\frac{1}{\varkappa_p(k)}\exp(-\varkappa_p(n) t),\quad \text{where}\quad \varkappa_p(k)=\sum_{j=1}^kj^{1/p-1}
\end{align*}
and
\begin{equation}\label{eq:q1:3}
I_p^n=2^n\cdot n!\cdot \prod_{k=1}^n\frac{1}{\varkappa_p(k)}.
\end{equation}
Finally, we combine \eqref{eq:q1:1} with \eqref{eq:q1:2} and \eqref{eq:q1:3} and obtain
$$
\vol(B_{p,1}^n)=2^n\prod_{k=1}^n\frac{1}{\varkappa_p(k)}.
$$
\end{proof}

\begin{rem}
Let us point out that for $p=1$, we get $\varkappa_1(k)=k$ and we recover the very well known formula $\vol(B^n_{1,1})=2^n/n!$.
The application of the polarization identity to other values of $q\not=1$ is also possible, but one arrives to an $n$-dimensional integral,
which (in contrary to $I_p^n$) seems to be hard to compute explicitly.
\end{rem}

\section{Asymptotic behavior}

Volumes of convex and non-convex bodies play an important role in many areas of mathematics, cf. \cite{Pisier}.
Nevertheless, for most of the applications we do not need the exact information about the volume, it is often enough
to apply good lower and/or upper bounds of this quantity. For example, for the use in local Banach theory,
it is sometimes sufficient to have some asymptotic bounds on $\vol(B_p^n)$ for $n$ large.
In this section, we provide two such estimates.

\subsection{Asymptotic behavior of $\vol(B_{p,q}^n)^{1/n}$}

The first quantity, we would like to study, is the $n$-th root of $\vol(B^n_{p,q})$. In the Lebesgue case $q=p$, \eqref{eq:volBp}
can be combined with the Stirling's formula (cf. \cite{WiW})
\begin{equation}\label{eq:Stirling}
\Gamma(t)=(2\pi)^{1/2}t^{t-1/2}e^{-t}e^{\theta(t)/t},\quad 0<t<\infty,
\end{equation}
where $0<\theta(t)<1/12$ for all $t>0$, to show that
\begin{equation}\label{eq:asymp:p}
\vol(B_p^n)^{1/n}\approx n^{-1/p},
\end{equation}
where the constants of equivalence do not depend on $n$. Combining \eqref{eq:asymp:p} with the embedding (cf. Theorem \ref{thm:emb:1})
$$
B_p^n\subset B_{p,\infty}^n\subset (1+\log(n))^{1/p}B_{p}^n,
$$
we observe that
\begin{equation}\label{eq:asymp:pq1}
n^{-1/p}\lesssim \vol(B^n_{p,\infty})\lesssim \Bigl(\frac{1+\log(n)}{n}\Bigr)^{1/p}
\end{equation}
for all $0<p\le \infty.$ The aim of this section is to show, that the lower bound in \eqref{eq:asymp:pq1} is sharp
and that \eqref{eq:asymp:p} generalizes to all $0<p<\infty$ and $0<q\le\infty$ without additional logarithmic factors.

If $0<p<\infty$ and $q=1$, this can be obtained as a consequence of Theorem \ref{thm:q1}. Indeed, elementary estimates give
$$
\varkappa_p(k)\approx k^{1/p}
$$
with constants independent on $k$ and Theorem \ref{thm:q1} then implies
$$
\vol(B^n_{p,1})^{1/n}\approx \Bigl(\prod_{k=1}^n \varkappa_p(k)^{-1}\Bigr)^{1/n}\approx  \Bigl(\prod_{k=1}^n k^{-1/p}\Bigr)^{1/n}\approx (n!)^{-\frac{1}{p}\cdot\frac{1}{n}}.
$$
The result is then finished by another application of Stirling's formula.

To extend the result also to $q\not=1$, we apply the technique of entropy together with interpolation.

\begin{thm}\label{thm:asym:1}
Let $n\in\N$, $0<p<\infty$ and $0<q\le\infty$. Then
\begin{equation}\label{eq:thm:asymp}
\vol(B_{p,q}^n)^{1/n}\approx n^{-1/p}
\end{equation}
with the constants of equivalence independent of $n$.
\end{thm}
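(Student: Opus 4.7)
The plan is to sandwich $B^n_{p,q}$ between the balls $B^n_{p,1}$, $B^n_p$ and $B^n_{p,\infty}$ using the standard monotonicity $\|\cdot\|_{p,q_2}\le c_{p,q_1,q_2}\|\cdot\|_{p,q_1}$ for $q_1\le q_2$ of the Lorentz quasi-norm in the second index (with constants independent of $n$). This reduces the theorem to a single new estimate, namely
\begin{equation*}
\vol(B^n_{p,\infty})^{1/n}\lesssim n^{-1/p},
\end{equation*}
which will be deduced from a volume--entropy inequality.

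The lower bound $\vol(B^n_{p,q})^{1/n}\gtrsim n^{-1/p}$ comes from the cube inclusion $[-cn^{-1/p},cn^{-1/p}]^n\subset B^n_{p,q}$ for a sufficiently small $c=c(p,q)>0$: for $x$ in this cube, $x_k^*\le cn^{-1/p}$ for all $k$, so $\|x\|_{p,q}^q\le c^qn^{-q/p}\sum_{k=1}^n k^{q/p-1}\lesssim c^q\cdot(p/q)$, while the case $q=\infty$ is trivial. For the upper bound, the Lorentz sandwich gives $B^n_{p,q}\subset C\cdot B^n_{p,1}$ for $0<q\le 1$, $B^n_{p,q}\subset C\cdot B^n_p$ for $1\le q\le p$, and $B^n_{p,q}\subset C\cdot B^n_{p,\infty}$ for $p\le q\le\infty$. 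Combined with $\vol(B^n_{p,1})^{1/n}\approx n^{-1/p}$ (from Theorem~\ref{thm:q1} and Stirling, as computed just above the statement) and $\vol(B^n_p)^{1/n}\approx n^{-1/p}$ (from \eqref{eq:volBp} and \eqref{eq:Stirling}), this settles all $0<q\le p$ and leaves only the displayed bound for $q=\infty$.

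For that remaining bound the plan is to invoke the classical volume--entropy inequality
\begin{equation*}
\vol(T(B_X))^{1/n}\le 2\cdot 2^{(k-1)/n}\,e_k(T)\,\vol(B_Y)^{1/n}
\end{equation*}
for linear maps $T$ between $n$-dimensional (quasi-)normed spaces, applied with $T=id:\ell^n_{p,\infty}\to\ell^n_p$ and $k=n$. This reduces the task to the dimension-free entropy estimate
\begin{equation*}
e_n\bigl(id:\ell^n_{p,\infty}\to\ell^n_p\bigr)\le C_p,
\end{equation*}
which is the main obstacle: the naive inclusion $B^n_{p,\infty}\subset(1+\log n)^{1/p}B^n_p$ from \eqref{eq:asymp:pq1} carries a forbidden logarithm and must be replaced by a genuine covering. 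The plan is a dyadic-block covering: write $B^n_{p,\infty}\subset\bigcup_{\sigma\in S_n}B_\sigma$ with $B_\sigma=\{x:|x_{\sigma(k)}|\le k^{-1/p}\}$, group the coordinates of each box $B_\sigma$ into dyadic layers $I_j=\{2^{j-1},\dots,\min(2^j-1,n)\}$ on which $|x_{\sigma(k)}|\le 2^{-(j-1)/p}$, and on each layer use a uniform quantisation grid of cardinality $2^{c|I_j|}$. Balancing the combinatorial cost against the product of per-layer grid sizes and summing the $\ell_p^n$-errors (a convergent geometric series in $j$) should yield a net of size $\le 2^{n-1}$ with $\ell_p^n$-radius $C_p$, which is the required entropy bound (and is an analog for general $p$ of the estimate behind Theorem~\ref{thm:entropy}).
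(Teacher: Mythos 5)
Your overall architecture is genuinely different from the paper's: the paper derives both the upper and the lower bound from the interpolation inequality for entropy numbers (Edmunds--Triebel) combined with the real-interpolation identity for Lorentz spaces and the known entropy estimates for $id:\ell^n_{p/2}\to\ell^n_\infty$, whereas you propose an explicit covering for the decisive case $q=\infty$ and a cube inclusion for the lower bound. Your lower bound is correct and in fact more elementary than the paper's Step~3, and your reduction of the upper bound to $\vol(B^n_{p,\infty})^{1/n}\lesssim n^{-1/p}$ via the monotonicity in $q$ is exactly the paper's Step~2. The problem lies in the one step that has to beat the logarithm in \eqref{eq:asymp:pq1}, namely the dyadic-block covering.

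With a \emph{uniform} grid of cardinality $2^{c|I_j|}$ on layer $I_j$, each coordinate of that layer receives $c$ bits, so the grid spacing is $\delta_j\approx 2^{-(j-1)/p}\,2^{-c}$ and the $\ell_p$-error contributed by the layer is $\bigl(|I_j|\,\delta_j^p\bigr)^{1/p}\approx 2^{(j-1)/p}\cdot 2^{-(j-1)/p}\,2^{-c}=2^{-c}$: it is \emph{constant in $j$}, not a convergent geometric series as you claim. Summing over the $\approx\log_2 n$ layers returns a total error $\approx 2^{-c}\log n$ (or $(\log n)^{1/p}2^{-c}$ for $p<1$), i.e.\ precisely the logarithmic loss you set out to remove, and no choice of the constant $c$ repairs this. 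The construction can be saved by a non-uniform bit allocation: give each coordinate of layer $I_j$ about $b_j=\tfrac1p(J-j)+c$ bits, where $J\approx\log_2 n$ is the number of layers, so that the finer grids sit on the layers carrying the large coordinates. Then the layer errors are $\approx 2^{(j-J)/p-c}$, a genuinely convergent geometric series, while the total number of grid points is $2^{\sum_j |I_j|b_j}=2^{O_p(n)}$ because $\sum_j 2^{j-1}(J-j)=O(2^J)=O(n)$. This matches the combinatorial cost $\binom{n}{|I_1|,\dots,|I_J|}\le 2^{O(n)}$ of recording which coordinates fall into which layer --- a bound you assert but should verify; it holds exactly because the layer sizes grow geometrically, so that $\sum_j|I_j|\log_2(n/|I_j|)=O(n)$. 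With these two corrections (and the $p$-triangle inequality in place of the triangle inequality when $p<1$) you obtain $e_{Cn}(id:\ell^n_{p,\infty}\to\ell^n_p)\le C_p$ for some constant $C$ independent of $n$, which is all the volume comparison needs; insisting on $k=n$ exactly and a net of size $2^{n-1}$ is unnecessary. Once repaired, your route is more self-contained than the paper's, at the price of carrying out the counting by hand.
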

\begin{proof}
\emph{Step 1.:} First, we show the upper bound of $\vol(B^n_{p,\infty})^{1/n}$.
For that reason, we define the entropy numbers of a bounded linear operator between two quasi-normed Banach spaces $X$ and $Y$
as follows
$$
e_k(T:X\to Y)=\inf\Bigl\{\varepsilon>0:\exists \{y_l\}_{l=1}^{2^{k-1}}\subset Y\ \text{such that}\ T(B_X)\subset\bigcup_{l=1}^{2^{k-1}}(y_l+\varepsilon B_Y)\Bigr\}.
$$
Here, $B_X$ and $B_Y$ stand for the unit ball of $X$ and $Y$, respectively.

We use the interpolation inequality for entropy numbers (cf. Theorem \cite[Theorem 1.3.2 (i)]{ET})
together with the interpolation property of Lorentz spaces (cf. \cite[Theorems 5.2.1 and 5.3.1]{BL}) and obtain that
$$
e_{k}(id:\ell_{p,\infty}^n\to\ell_\infty^n)\le c_p e_k(id:\ell_{p/2}^n\to\ell_{\infty}^n)^{1/2},
$$
where $c_p>0$ depends only on $p$. Together with the known estimates of entropy numbers of embeddings of Lebesgue-type sequence spaces \cite{GL,KV,K2001,S},
we obtain
$$
e_{n}(id:\ell^n_{p,\infty}\to\ell_\infty^n)\le c_p n^{-1/p}.
$$
By the definition of entropy numbers, this means that $B_{p,\infty}^n$ can be covered with $2^{n-1}$ balls in $\ell_\infty^n$ with radius
$(1+\varepsilon)c_pn^{-1/p}$ for every $\varepsilon>0$. Comparing the volumes, we obtain
$$
\vol(B_{p,\infty}^n)\le 2^{n-1}[(1+\varepsilon)c_pn^{-1/p}]^n\vol(B_\infty^n),
$$
i.e. $\vol(B_{p,\infty}^n)^{1/n}\le c_p' n^{-1/p}.$

\emph{Step 2.:} The estimate from above for general $0<q\le \infty$ is covered by the embedding of Theorem \ref{thm:emb:1} and by the previous step.

\emph{Step 3.:} For the lower bound, we use again the interpolation
of entropy numbers leading to
$$
e_{n}(id:\ell_{p/2}^n\to \ell_{p,q}^n)\le c_{p,q} e_{n}(id:\ell_{p/2}^n\to \ell_{\infty}^n)^{1/2}.
$$
Therefore, the unit ball $B_{p/2}^n$ can be covered by $2^{n-1}$ copies of $B_{p,q}^n$ multiplied by $(1+\varepsilon)c n^{-1/p}$.
Comparing the volumes, we obtain
$$
c_1n^{-2/p}\le \vol(B_{p/2}^n)^{1/n}\le c_2n^{-1/p}\vol(B_{p,q}^n)^{1/n},
$$
which finishes the proof.
\end{proof}

The result of Theorem \ref{thm:asym:1} seems to be a bit surprising at first look - especially in view of \eqref{eq:asymp:pq1},
which suggests that some additional logarithmic factor could appear.
That the outcome of Theorem \ref{thm:asym:1} was by no means obvious is confirmed by inspecting the case $p=\infty$,
where the behavior of $n$-th root of the volume of the unit ball actually differs from \eqref{eq:thm:asymp}.

\begin{thm}\label{thm:asym:inf1}
Let $n\in \N$ be a positive integer. Then
\begin{equation}\label{eq:asym:inf1}
[\vol(B^n_{\infty,1})]^{1/n}\approx (\log (n+1))^{-1}
\end{equation}
with the constants of equivalence independent on $n$.
\end{thm}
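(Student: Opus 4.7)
The plan is to apply Theorem \ref{thm:q1} directly with $p=\infty$. Since $1/p-1 = -1$ when $p=\infty$, we have $\varkappa_\infty(k) = \sum_{j=1}^k j^{-1} = H_k$, the $k$-th harmonic number. Thus
\begin{equation*}
\vol(B^n_{\infty,1}) = 2^n\prod_{k=1}^n \frac{1}{H_k},
\qquad
[\vol(B^n_{\infty,1})]^{1/n} = 2\exp\Bigl(-\frac{1}{n}\sum_{k=1}^n\log H_k\Bigr).
\end{equation*}
So it suffices to show that $\frac{1}{n}\sum_{k=1}^n \log H_k = \log\log(n+1) + O(1)$, with constants independent of $n$. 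After verifying the small cases $n \le n_0$ by direct computation, I can focus on $n$ large.

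For the upper bound, I would use the elementary fact that $H_k$ is monotone in $k$ together with $H_n \le 1+\log n$. This gives
\begin{equation*}
\frac{1}{n}\sum_{k=1}^n \log H_k \le \log H_n \le \log(1+\log n) \le \log\log(n+1) + C,
\end{equation*}
so $[\vol(B^n_{\infty,1})]^{1/n} \gtrsim (\log(n+1))^{-1}$.

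For the lower bound — which is the more delicate direction — the point is that $H_k$ already exceeds a constant multiple of $\log n$ once $k$ is even a small power of $n$, and the vast majority of indices $k\le n$ satisfy this. Concretely, I would split the sum at $k_0 = \lceil \sqrt{n}\rceil$. For $k<k_0$ use the trivial bound $\log H_k \ge \log H_1 = 0$. For $k_0 \le k \le n$ use $H_k \ge H_{k_0} \ge \log k_0 \ge \tfrac12 \log n$, so $\log H_k \ge \log\log n - \log 2$. Since there are $n - k_0 + 1 = n(1 - O(n^{-1/2}))$ such indices,
\begin{equation*}
\frac{1}{n}\sum_{k=1}^n \log H_k \ge \bigl(1 - n^{-1/2}\bigr)\bigl(\log\log n - \log 2\bigr) \ge \log\log(n+1) - C',
\end{equation*}
for $n$ large enough. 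Combining both bounds gives \eqref{eq:asym:inf1}.

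I do not expect any serious obstacle. The only subtlety is that a naive lower bound (e.g.\ restricting to $k \in [n/2, n]$) gives only a factor $\tfrac12 \log\log(n+1)$, which is not enough to match the claimed equivalence; the remedy is to take a much smaller cutoff such as $\sqrt{n}$ so that the discarded block contributes negligibly to the average. Everything else is routine estimation of harmonic numbers.
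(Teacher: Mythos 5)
Your proposal is correct. It starts exactly where the paper does --- Theorem \ref{thm:q1} with $\varkappa_\infty(k)=H_k\approx\log(k+1)$ --- and the easy direction (the lower bound for the volume) is the same monotonicity argument, merely phrased via $\frac1n\sum_k\log H_k\le\log H_n$ instead of bounding each factor $1/\log(k+1)$ below by $1/\log(n+1)$. The two proofs part ways on the harder direction, the upper bound for $[\vol(B^n_{\infty,1})]^{1/n}$. The paper applies the AM--GM inequality to convert the geometric mean of the $1/\log(k+1)$ into the arithmetic mean $\frac1n\sum_k 1/\log(k+1)$ and then invokes the asymptotics of the offset logarithmic integral, $\int_2^{n+1}dt/\log t=O(n/\log n)$. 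You instead stay with the logarithm of the product and split the index set at $k_0=\lceil\sqrt n\rceil$: the block $k<k_0$ is discarded via $\log H_k\ge 0$, while the block $k\ge k_0$ contributes $\log\log n-O(1)$ per term, and since $k_0=o(n)$ the discarded block costs only an $O(1)$ additive error in the average. Your remark about why a cutoff at $n/2$ would fail (yielding only $(\log n)^{-1/2}$ after exponentiation) is exactly the right subtlety to flag, and the $\sqrt n$ cutoff resolves it. Both routes are elementary and of comparable length; yours avoids any reference to the logarithmic integral, while the paper's AM--GM step is arguably slicker once that asymptotic is granted. The small-$n$ cases and the passage from $\log\log n$ to $\log\log(n+1)$ are handled correctly by your $O(1)$ bookkeeping, so I see no gap.
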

\begin{proof}
By Theorem \ref{thm:q1}, we know that
$$
\vol(B^n_{\infty,1})^{1/n}\approx \Bigl(\prod_{k=1}^n \varkappa_\infty(k)^{-1}\Bigr)^{1/n},
$$
where
$$
\varkappa_\infty(k)=\sum_{j=1}^k \frac{1}{j}\approx \log(k+1)\quad \text{for any}\quad k\ge 1.
$$
Therefore,
$$
\vol(B^n_{\infty,1})^{1/n}\approx \Bigl(\prod_{k=1}^n \frac{1}{\log(k+1)}\Bigr)^{1/n}.
$$
The lower bound of this quantity is straightforward
$$
\Bigl(\prod_{k=1}^n \frac{1}{\log(k+1)}\Bigr)^{1/n}\ge
\Bigl(\prod_{k=1}^n \frac{1}{\log(n+1)}\Bigr)^{1/n}=\frac{1}{\log(n+1)}.
$$
For the upper bound, we use the inequality between geometric and arithmetic mean and obtain
$$
\Bigl(\prod_{k=1}^n \frac{1}{\log(k+1)}\Bigr)^{1/n}\le \frac{1}{n} \sum_{k=1}^n \frac{1}{\log(k+1)}\le \frac{1}{n}\biggl\{\frac{1}{\log(2)}+\int_2^{n+1}\frac{1}{\log(t)}dt\biggr\}.
$$
The last integral is known as the (offset) logarithmic integral and is known to be asymptotically $O(x/\log(x))$ for $x$ going to infinity, cf. \cite[Chapter 5]{AS}.
Alternatively, the same fact can be shown easily by the L'Hospital's rule.
This finishes the proof.
\end{proof}

\subsection{Ratio of volumes}

The unit balls $B^n_{p,\infty}$ of weak Lebesgue spaces
are commonly considered to be ``slightly larger'' than the unit balls of Lebesgue spaces
with the same summability parameter.
The aim of this section is to study their relation in more detail. For that sake, we define for $0<p<\infty$
\begin{equation}\label{eq:ratio:1}
R_{p,n}:=\frac{\vol(B_{p,\infty}^n)}{\vol(B_p^n)}.
\end{equation}
By the embedding in Theorem \ref{thm:emb:1} (which we give below with the full proof for reader's convenience, cf. \cite[Chapter 4, Proposition 4.2]{BS})
we know that this quantity is bounded from below by one.
Later on, we would like to study its behavior (i.e. growth) when $n$ tends to $\infty.$

\begin{thm}\label{thm:emb:1}
If $0<p<\infty$ and $0<q\le r\le \infty$, then
\begin{equation}\label{eq:emb:p2}
B_{p,q}^n\subset c_{p,q,r} B_{p,r}^n,
\end{equation}
where the quantity $c_{p,q,r}$ does not depend on $n$. In particular, $B_{p,q}^n\subset B_{p,r}^n$
if also $q\le p$.
\end{thm}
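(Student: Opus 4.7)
The proof reduces to establishing the quasi-norm inequality $\|x\|_{p,r}\le c_{p,q,r}\|x\|_{p,q}$ for every $x\in\R^n$, with the constant independent of $n$. The key step, from which everything else follows, is to handle the extremal case $r=\infty$. Since the rearrangement $x^*$ is non-increasing, for every $1\le k\le n$ I would estimate
\begin{equation*}
\|x\|_{p,q}^q=\sum_{j=1}^n j^{q/p-1}(x_j^*)^q\ge (x_k^*)^q\sum_{j=1}^k j^{q/p-1},
\end{equation*}
so a lower bound on $\sum_{j=1}^k j^{q/p-1}$ in terms of $k^{q/p}$ converts this into the desired pointwise estimate $k^{1/p}x_k^*\lesssim \|x\|_{p,q}$, i.e.\ $\|x\|_{p,\infty}\lesssim \|x\|_{p,q}$.

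The bound on the partial sum splits into two sub-cases according to the sign of $q/p-1$. If $q\le p$, the sequence $j\mapsto j^{q/p-1}$ is non-increasing, so every term is at least the last one, giving $\sum_{j=1}^k j^{q/p-1}\ge k\cdot k^{q/p-1}=k^{q/p}$ with constant one. If $q>p$, the sequence is non-decreasing and a comparison with the integral $\int_0^k t^{q/p-1}dt=(p/q)k^{q/p}$ gives the same bound up to the factor $p/q$. Combining, $\sum_{j=1}^k j^{q/p-1}\ge c_{p,q}\,k^{q/p}$ with $c_{p,q}=\min(1,p/q)$, which yields $\|x\|_{p,\infty}\le c_{p,q}^{-1/q}\|x\|_{p,q}$ and, crucially, $\|x\|_{p,\infty}\le\|x\|_{p,q}$ whenever $q\le p$.

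For the intermediate regime $q<r<\infty$ I would use the algebraic splitting
\begin{equation*}
k^{r/p-1}(x_k^*)^r=k^{q/p-1}(x_k^*)^q\cdot\bigl(k^{1/p}x_k^*\bigr)^{r-q},
\end{equation*}
bound the last factor uniformly by $\|x\|_{p,\infty}^{r-q}$, and sum over $k$ to obtain
\begin{equation*}
\|x\|_{p,r}^r\le \|x\|_{p,\infty}^{r-q}\,\|x\|_{p,q}^q\le c_{p,q}^{-(r-q)/q}\,\|x\|_{p,q}^r,
\end{equation*}
taking the $r$-th root finishes the general case. In the special situation $q\le p$ one has $c_{p,q}=1$, so the chain reads $\|x\|_{p,r}\le\|x\|_{p,\infty}\cdot\text{(something}\le 1)\le\|x\|_{p,q}$, giving $B_{p,q}^n\subset B_{p,r}^n$ without any multiplicative constant, which is the "in particular" assertion. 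The case $r=\infty$ is already covered by the first step.

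The only place where one has to be a bit careful is the estimation of $\sum_{j=1}^k j^{q/p-1}$, because the two regimes $q\le p$ and $q>p$ behave differently (monotonicity of the summands flips) and the sharp constant $1$ in the former regime is exactly what enables the constant-free "in particular" statement. Apart from this case distinction the argument is purely mechanical.
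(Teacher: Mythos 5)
Your proposal is correct and follows essentially the same route as the paper: the same pointwise estimate $\|x\|_{p,q}^q\ge (x_k^*)^q\sum_{j\le k}j^{q/p-1}$ for the case $r=\infty$ with the identical two-regime lower bound on the partial sum (constant $1$ for $q\le p$, integral comparison giving $p/q$ for $q>p$), and the same factorization $k^{r/p-1}(x_k^*)^r=k^{q/p-1}(x_k^*)^q\,(k^{1/p}x_k^*)^{r-q}$ for the intermediate case $q<r<\infty$. The resulting constants $c_{p,q,\infty}=\max(1,q/p)^{1/q}$ and $c_{p,q,r}=c_{p,q,\infty}^{(r-q)/r}$ agree with those in the paper, and your explicit monotonicity argument for $q\le p$ fills in the step the paper leaves as ``elementary calculus.''
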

\begin{proof}
First, we prove the assertion with $r=\infty$. If $1\le l\le n$ is a positive integer, the result follows from
\begin{align*}
\|x\|_{p,q}^q=\sum_{k=1}^n k^{q/p-1}(x_k^*)^q\ge\sum_{k=1}^l k^{q/p-1}(x_k^*)^q \ge (x_l^*)^q\sum_{k=1}^lk^{q/p-1}
\end{align*}
and
$$
l^{q/p}(x_l^*)^q\le \|x\|_{p,q}^q\cdot l^{q/p}\cdot\Bigl(\sum_{k=1}^l k^{q/p-1}\Bigr)^{-1}.
$$
We obtain that
\begin{equation}\label{eq:emb:p1}
\|x\|_{p,\infty}=\max_{l=1,\dots,n}l^{1/p}x_l^*\le \|x\|_{p,q}\sup_{l\in\N}l^{1/p}\cdot\Bigl(\sum_{k=1}^l k^{q/p-1}\Bigr)^{-1/q}.
\end{equation}
For $q\le p$, it can be shown by elementary calculus that
$$
\sum_{k=1}^lk^{q/p-1}\ge l^{q/p}.
$$
Together with \eqref{eq:emb:p1}, this implies that $\|x\|_{p,\infty}\le \|x\|_{p,q}$
and we obtain $B_{p,q}^n\subset B_{p,\infty}^n,$ i.e. \eqref{eq:emb:p2} with $c_{p,q,\infty}=1.$
If, on the other hand, $q>p$ we estimate
$$
\sum_{k=1}^lk^{q/p-1}\ge\int_0^l t^{q/p-1}dt=\frac{p}{q}\cdot l^{q/p}
$$
and \eqref{eq:emb:p2} follows with $c_{p,q,\infty}=(q/p)^{1/q}$.

If $0<q< r<\infty$, we write
\begin{align*}
\|x\|_{p,r}&=\Bigl\{\sum_{k=1}^n k^{r/p-1}(x_k^*)^r\Bigr\}^{1/r}
=\Bigl\{\sum_{k=1}^n k^{q/p-1}(x_k^*)^q k^{(r-q)/p}(x_k^*)^{r-q}\Bigr\}^{1/r}\\
&\le \|x\|_{p,\infty}^{\frac{r-q}{r}}\cdot\|x\|_{p,q}^{q/r}\le [c_{p,q,\infty}\|x\|_{p,q}]^{\frac{r-q}{r}}\cdot\|x\|_{p,q}^{q/r},
\end{align*}
i.e.
$$
\|x\|_{p,r}\le c_{p,q,r}\|x\|_{p,q}\quad\text{with}\quad c_{p,q,r}=(c_{p,q,\infty})^{\frac{r-q}{r}}.
$$
\end{proof}

We show that the ratio $R_{p,n}$ defined in \eqref{eq:ratio:1} grows exponentially for $0<p\le 2$.
Naturally, we also conjecture that the same is true for all $0<p<\infty$, but we leave this as an open problem.

\begin{thm}\label{thm:ratio}
For every $0<p\le 2$, there is a constant $C_p>1$, such that
$$
R_{p,n}\gtrsim C_p^n
$$
with the multiplicative constant independent on $n$.
\end{thm}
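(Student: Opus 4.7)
I would split the argument into two regimes based on whether the balls $B_p^n$ and $B_{p,\infty}^n$ are convex.

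\textbf{Regime 1 ($0 < p < 1$).} Here the balls are not convex, but I would exploit the inclusion furnished by Theorem \ref{thm:emb:1}: since $q=1 > p$, the constant is $c_{p,1,\infty}=1/p$, so $pB_{p,1}^n \subset B_{p,\infty}^n$. Combining this with the explicit formula of Theorem \ref{thm:q1} and the integral estimate $\varkappa_p(k) \le p(k+1)^{1/p}$ (which holds because $t^{1/p-1}$ is increasing for $p<1$, so $\sum_{j=1}^k j^{1/p-1} \le \int_1^{k+1} t^{1/p-1}\,dt$) yields
$$
\vol(B_{p,\infty}^n) \ge p^n \vol(B_{p,1}^n) \ge \frac{2^n}{[(n+1)!]^{1/p}}.
$$
Dividing by Dirichlet's formula \eqref{eq:volBp} and applying Stirling reduces the problem to a calculus fact about
$$
\psi(p) := p^{1/p}\Gamma(1+1/p),
$$
namely $R_{p,n} \gtrsim \psi(p)^{-n}$ up to polynomial factors. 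Logarithmic differentiation shows $\psi(1)=1$ and $(\log\psi)'(1)=\gamma>0$; more carefully, using $\Psi(1+1/p) \approx -\log p + p/2$ for small $p$ one checks $\psi$ is strictly increasing on $(0,1]$, so $\psi(p)<1$ throughout $(0,1)$, giving $C_p>1$.

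\textbf{Regime 2 ($1 \le p \le 2$).} Both $B_p^n$ and $B_{p,\infty}^n$ are symmetric convex bodies, which lets me use duality. The Köthe duality $(\ell_{p,\infty}^n)^* \simeq \ell_{p',1}^n$ with $1/p+1/p'=1$ implies that $(B_{p,\infty}^n)^\circ$ coincides with $B_{p',1}^n$ up to a universal multiplicative constant, and similarly $(B_p^n)^\circ = B_{p'}^n$. Applying Blaschke--Santaló to $B_p^n$ (upper bound on $\vol(B_p^n)\vol(B_{p'}^n)$) and the reverse Santaló inequality of Bourgain--Milman, with e.g. Kuperberg's constant $c_0$, to $B_{p,\infty}^n$ gives
$$
R_{p,n} \ge c_0^n \cdot \frac{\vol(B_{p'}^n)}{\vol(B_{p',1}^n)}.
$$
The right-hand ratio is then computed from \eqref{eq:volBp} and Theorem \ref{thm:q1}. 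Using $\varkappa_{p'}(k) \approx p' k^{1/p'}$ (again by integral comparison) and Stirling, one finds that this ratio grows like an explicit exponential $d_p^n$, and one checks that $c_0 d_p > 1$ for all $p \in [1,2]$. In particular, for $p=1$ one has $\vol(B_{p'}^n)/\vol(B_{p',1}^n) = \prod_{k=1}^n H_k$, which by the estimate $\sum \log H_k \approx n\log\log n$ grows super-exponentially, so any $C_1>1$ works.

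\textbf{Main obstacle.} The principal difficulty is the borderline case $p=1$, where Regime 1 degenerates ($\psi(1)=1$); the convex-duality approach of Regime 2 is used precisely to cover it. A secondary issue is that Regime 2 relies on the non-elementary Bourgain--Milman inequality and careful bookkeeping of the Köthe-duality constants. A possible alternative would be to extract a lower bound on $\vol(B_{p,\infty}^n)$ directly from the recursion of Theorem \ref{thm:ind:1}, but the strongly alternating signs make any elementary cancellation argument delicate, and it is not clear how to recover the sharp exponential rate from it.
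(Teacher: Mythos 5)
Your Regime 1 is sound and is a genuinely different (and rather elegant) route for $0<p<1$: the chain $pB_{p,1}^n\subset B_{p,\infty}^n$ from Theorem \ref{thm:emb:1}, the exact formula of Theorem \ref{thm:q1}, the comparison $\varkappa_p(k)\le p(k+1)^{1/p}$, and the monotonicity of $\psi(p)=p^{1/p}\Gamma(1+1/p)$ (checked via $\Psi(1+u)-\log u-1\approx \tfrac{1}{2u}-1<0$) all go through and yield $C_p=\psi(p)^{-1}>1$ for $0<p<1$. But, as you note, this degenerates exactly at $p=1$, so everything hinges on Regime 2 covering $[1,2]$ — and Regime 2 contains a fatal error.

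The error is the premise that $B_{p,\infty}^n$ is a convex body for $1\le p\le 2$. It is not, for any $p$: e.g.\ in $\R^2$ the points $(1,2^{-1/p})$ and $(2^{-1/p},1)$ lie in $B_{p,\infty}^2$ but their midpoint does not. Consequently Blaschke--Santal\'o/Bourgain--Milman can only be applied to $\mathrm{conv}(B_{p,\infty}^n)$, and the duality argument bounds $\vol(\mathrm{conv}(B_{p,\infty}^n))$ from below, not $\vol(B_{p,\infty}^n)$. The gap between these two volumes is itself an exponential-scale quantity of exactly the type the theorem is about, so it cannot be waved away. The case $p=1$, which is the whole point of Regime 2, is the worst: one computes $(B_{1,\infty}^n)^\circ=B_{\infty,1}^n$, and Theorem \ref{thm:asym:inf1} gives $\vol(B_{\infty,1}^n)^{1/n}\approx 1/\log n$, so Bourgain--Milman yields $\vol(\mathrm{conv}(B_{1,\infty}^n))^{1/n}\gtrsim \log n/n$, whereas $\vol(B_{1,\infty}^n)^{1/n}\approx 1/n$ by Theorem \ref{thm:asym:1}; the convex hull is super-exponentially larger than the ball, and your ``bonus'' factor $\prod_k H_k\approx(\log n)^{n}$ is precisely the artifact of this discrepancy — it cancels and leaves no information about $R_{1,n}$. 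For $1<p\le 2$ one can insert the normability correction $\mathrm{conv}(B_{p,\infty}^n)\subset p'B_{p,\infty}^n$, but then the required inequality becomes $c_0\,\Gamma(1+1/p')(p')^{1/p'}>1$, which fails numerically (at $p=p'=2$ it gives about $(\pi/4)\cdot\Gamma(3/2)\sqrt{2}\approx 0.98$). So the range $1\le p\le 2$ is not proved.

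For comparison, the paper avoids convexity and duality altogether: it exhibits an explicit subset ${\mathcal B}_p^n\subset B_{p,\infty}^n$ by prescribing that the $i$-th largest coordinate lie in $[(i+1)^{-1/p},i^{-1/p}]$ for $i\le n/2$ and the remaining $n/2$ coordinates lie in $[0,n^{-1/p}]$, computes $\vol({\mathcal B}_p^n)$ by counting the ${n\choose n/2}(n/2)!$ admissible arrangements, and compares with Dirichlet's formula \eqref{eq:volBp} via Stirling; the restriction $p\le 2$ enters only in the final numerical estimate. If you want to keep your Regime 1, you would still need an argument of this direct kind (or another convexity-free idea) to handle $1\le p\le 2$.
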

\begin{proof}
We give the proof for even $n$'s, the proof for odd $n$'s is similar, only slightly more technical.
Let ${\mathcal B^n_p}\subset \R^n$ be the set of vectors $x\in\R^n$, which satisfy
$$
x_1^*\in\Bigl[\frac{1}{2^{1/p}},1\Bigr],\quad x_2^*\in \Bigl[\frac{1}{3^{1/p}},\frac{1}{2^{1/p}}\Bigr],\dots,x^*_{n/2}\in\Bigl[\frac{1}{(n/2+1)^{1/p}},\frac{1}{(n/2)^{1/p}}\Bigr]
$$
and $\displaystyle x^*_{n/2+1},\dots,x_n^*\in\Bigl[0,\frac{1}{n^{1/p}}\Bigr]$.
Then ${\mathcal B}_p^n\subset B_{p,\infty}^n$
and the volume of ${\mathcal B}_p^n$ can be calculated by combinatorial methods. Indeed,
there is ${n\choose n/2}$ ways how to choose the $n/2$ indices of the smallest coordinates. Furthermore, there is $(n/2)!$
ways, how to distribute the $n/2$ largest coordinates. We obtain that
\begin{align}\label{eq:ratio:proof1}
R_{p,n}&=\frac{\vol(B_{p,\infty}^{n})}{\vol(B_p^n)}\ge \frac{\vol({\mathcal B}_p^n)}{\vol(B_p^n)}\\
\notag&\ge\frac{\Gamma(1+n/p)}{\Gamma(1+1/p)^n}\cdot \binom{n}{n/2}\cdot (n/2)! \cdot\prod_{i=1}^{n/2}\frac{(i+1)^{1/p} - i^{1/p}}{i^{1/p}(i+1)^{1/p}} \cdot\left(\frac{1}{n^{1/p}}\right)^{n/2}.
\end{align}


First, we observe that, by Stirling's formula \eqref{eq:Stirling},
\begin{align}
\notag\Gamma(1+n/p)&\cdot\binom{n}{n/2}\cdot (n/2)! \cdot\prod_{i=1}^{n/2}\frac{1}{i^{1/p}(i+1)^{1/p}} \cdot\left(\frac{1}{n^{1/p}}\right)^{n/2}\\
\notag& = \frac{\Gamma(1+n/p)n!}{\left[(n/2)!\right]^{1+1/p}\left[(n/2+1)!\right]^{1/p}n^{n/(2p)}}\\
\label{eq:ratio:proof2}&\approx \frac{\sqrt{2\pi n/p}\left(\frac{n}{pe}\right)^{n/p}\sqrt{2\pi n}\left(\frac{n}{e}\right)^n}{\left(\sqrt{\pi n}\right)^{1+2/p}(n/2+1)^{1/p}\left(\frac{n}{2e}\right)^{n/2+n/p}n^{n/(2p)}}\\
\notag&\approx \left(\frac{2^{1/2+1/p}}{p^{1/p}e^{1/2}}\right)^n\cdot n^{n/2-n/(2p)+1/2-2/p}.
\end{align}

By the mean value theorem, we obtain
$$
(i+1)^{1/p} - i^{1/p}\geq \begin{cases}
\frac{i^{1/p-1}}{p},\quad 0<p\le 1,\\
\frac{(i+1)^{1/p-1}}{p},\quad 1<p\le 2.
\end{cases}
$$
We use \eqref{eq:Stirling} to estimate $\Gamma(1+1/p)$ together with \eqref{eq:ratio:proof1} and \eqref{eq:ratio:proof2} and obtain
\begin{align*}
R_{p,n} & \gtrsim \left(\frac{2^{1/2+1/p}}{\Gamma(1+1/p)p^{1/p}e^{1/2}}\right)^n\cdot \frac{n^{n/2-n/(2p)+1/2-2/p}}{p^{n/2}}
\cdot\left[(n/2)!\right]^{1/p-1}(n/2+1)^{\alpha(1/p-1)}\\
& \approx \left(\frac{2^{1+1/(2p)}}{\Gamma(1+1/p)p^{1/p+1/2}e^{1/(2p)}}\right)^n\cdot n^{-3/(2p)+\alpha(1/p-1)}
\gtrsim \left(\frac{2^{1/2+1/(2p)}}{\pi^{1/2}e^{p/12-1/(2p)}}\right)^n n^{-3/(2p)+\alpha(1/p-1)},
\end{align*}
where $\alpha=0$ for $0<p\le 1$ and $\alpha=1$ for $1<p\le 2.$
The proof is then finished by monotonicity and
$$
\frac{2^{1/2+1/(2p)}}{\pi^{1/2}e^{p/12-1/(2p)}}=\sqrt{\frac{2}{\pi}} \cdot\frac{ (2e)^{1/(2p)}}{e^{p/12}}\ge
\sqrt{\frac{2}{\pi}}\cdot \frac{(2e)^{1/4}}{e^{1/6}}>1.
$$

\end{proof}

\section{Entropy numbers}

We have already seen the closed connection between estimates of volumes of unit balls of 
finitedimensional (quasi-)Banach spaces and the decay of entropy numbers of embeddings of such spaces
in the proof of Theorem \ref{thm:asym:1}.
With the same arguments as there, it is rather straightforward to prove that
\begin{equation}\label{eq:interpol:1}
e_k(id:\ell^n_{p_0,q_0}\to \ell_{p_1,q_1}^n)\approx e_k(id:\ell^n_{p_0}\to \ell_{p_1}^n)
\end{equation}
for $0<p_0,p_1<\infty$ with $p_0\not=p_1.$
On the other hand, it was shown in \cite{EN}, that the entropy numbers of diagonal operators between Lorentz sequence spaces
can exhibit also a very complicated behavior. Actually, they served in \cite{EN} as the first counterexample to a commonly conjectured interpolation
inequality for entropy numbers.

We complement \eqref{eq:interpol:1} by considering  the limiting case $p_0=p_1.$
As an application of our volume estimates, accompanied by further arguments, we will investigate in this section
the decay of the entropy numbers $e_k(id:\ell^n_{1,\infty}\to \ell^n_{1}).$

Before we come to our main result, we state a result from coding theory \cite{coding:1,coding:2}, which turned out to be useful
also in connection with entropy numbers \cite{EN,KV} and even in optimality of sparse recovery in compressed sensing \cite{BCKV,FPRU,FR}.

\begin{lem}\label{Lemma:coding} Let $k\le n$ be positive integers. Then there are $M$ subsets $T_1,\dots,T_M$ of $\{1,\dots,n\}$,
such that
\begin{enumerate}
\item[(i)] $\displaystyle M\ge \Bigl(\frac{n}{4k}\Bigr)^{k/2}$,
\item[(ii)] $|T_i|=k$ for all $k=1,\dots,M$,
\item[(iii)] $|T_i\cap T_j|<k/2$ for all $i\not=j.$
\end{enumerate}
\end{lem}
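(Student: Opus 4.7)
The plan is to prove Lemma~\ref{Lemma:coding} by the classical Gilbert--Varshamov type greedy construction for constant-weight codes. Set $\mathcal F = \{T \subseteq \{1,\dots,n\}: |T|=k\}$, so $|\mathcal F| = \binom{n}{k}$, and for $T \in \mathcal F$ let the \emph{bad neighborhood} of $T$ be
$$
B(T) = \{T' \in \mathcal F : |T \cap T'| \ge k/2\}.
$$
I construct $T_1,\dots,T_M$ inductively: pick $T_1 \in \mathcal F$ arbitrarily, and having chosen $T_1,\dots,T_m$ pairwise satisfying $|T_i \cap T_j| < k/2$, let $T_{m+1}$ be any element of $\mathcal F \setminus \bigcup_{i \le m} B(T_i)$, stopping when this set becomes empty. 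Properties (ii) and (iii) then hold by construction. Since the process terminates only when $\mathcal F = \bigcup_{i=1}^M B(T_i)$, and $|B(T)|$ is a constant $B$ independent of $T$ by the symmetry of the permutation action on $\{1,\dots,n\}$, one obtains the Gilbert--Varshamov-type bound
$$
M \ge \binom{n}{k}\big/ B.
$$

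The next task is an upper bound for $B$. Counting pairs $(S,T')$ with $S\subseteq T$, $|S|=\lceil k/2\rceil$ and $T'\supseteq S$, $|T'|=k$, and observing that every $T'$ contributing to $B$ is produced at least once, gives
$$
B \le \binom{k}{\lceil k/2\rceil}\binom{n-\lceil k/2\rceil}{\lfloor k/2\rfloor}.
$$
Combined with the identity $\binom{n}{\lceil k/2\rceil}\binom{n-\lceil k/2\rceil}{\lfloor k/2\rfloor} = \binom{k}{\lceil k/2\rceil}\binom{n}{k}$, which just says that the number of pairs (half-subset, $k$-subset containing it) can be counted in two ways, this yields
$$
M \ge \frac{\binom{n}{\lceil k/2\rceil}}{\binom{k}{\lceil k/2\rceil}^{2}}.
$$
Elementary estimates, namely $\binom{n}{\lceil k/2\rceil} \ge (2n/k)^{k/2}$ together with a sharp form of Stirling's approximation for $\binom{k}{\lceil k/2\rceil}$, and a brief case distinction for the parity of $k$, then deliver the claimed lower bound $M \ge (n/(4k))^{k/2}$.

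The main technical obstacle will be nailing the precise constant $4$. A crude application of $\binom{k}{\lceil k/2\rceil} \le 2^k$ produces only $(n/(8k))^{k/2}$, which falls short by a factor $2^{k/2}$. To recover this factor I would use the sharpened estimate $\binom{k}{\lceil k/2\rceil} \lesssim 2^k/\sqrt{k}$ coming from Stirling, refine the count of $B$ so as not to overcount $T'$ with $|T \cap T'|$ strictly larger than $\lceil k/2\rceil$, or, alternatively, run a probabilistic alteration exploiting the hypergeometric tail bound $\Pr[|T\cap T'| \ge k/2] \le (cek/n)^{k/2}$ with a suitable constant $c$. Any of these refinements, combined with the parity book-keeping, should give the stated conclusion and reveals the argument to be the Gilbert--Varshamov bound for constant-weight binary codes of weight $k$ and minimal distance larger than $k$.
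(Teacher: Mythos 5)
First, a remark on the comparison itself: the paper does not prove this lemma at all --- it is imported from the coding-theory literature \cite{coding:1,coding:2} --- so your argument has to stand on its own. The skeleton you chose is the standard Gilbert--Varshamov argument for constant-weight codes and is correct as far as it goes: the greedy construction gives (ii) and (iii) and the bound $M\ge \binom{n}{k}/B$; the double count $B\le \binom{k}{\lceil k/2\rceil}\binom{n-\lceil k/2\rceil}{\lfloor k/2\rfloor}$ and the identity reducing this to $M\ge \binom{n}{\lceil k/2\rceil}/\binom{k}{\lceil k/2\rceil}^{2}$ are both right.

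The gap is exactly where you suspect it, and the first remedy you list cannot close it. From $\binom{n}{\lceil k/2\rceil}\ge (2n/k)^{k/2}$ and $\binom{k}{\lceil k/2\rceil}\le C\,2^{k}/\sqrt{k}$ you get only $M\gtrsim k\,(n/(8k))^{k/2}$; the deficit against $(n/(4k))^{k/2}$ is the \emph{exponential} factor $2^{k/2}$, and the polynomial gain $k$ coming from the $\sqrt{k}$ in the central binomial coefficient cannot recover it (take $n=8k$: this bound is $O(k)$ while the target is $2^{k/2}$). Nor is the loss in the count of $B$, whose exact value $\sum_{j\ge\lceil k/2\rceil}\binom{k}{j}\binom{n-k}{k-j}$ is dominated by the $j=\lceil k/2\rceil$ term when $n\gg k$. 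The real culprit is the crude bound $\binom{n}{m}\ge (n/m)^{m}$. Upgrade it to $\binom{n}{m}\ge (n-m+1)^{m}/m!$ and apply Stirling to $m!=\lceil k/2\rceil!$: this gains a factor of order $e^{k/2}/\sqrt{k}$ over $(n/m)^{m}$, and since $e>2$ this more than covers the missing $2^{k/2}$. Concretely, one may assume $n>4k$ (otherwise the right-hand side of (i) is at most $1$ and a single set suffices), and then for even $k$ one obtains $M\ge c\,k^{-1/2}\bigl(e(n-k/2)/(8k)\bigr)^{k/2}\ge c\,k^{-1/2}\bigl(\tfrac{7e}{16}\bigr)^{k/2}\bigl(\tfrac{n}{4k}\bigr)^{k/2}$ with $7e/16>1$, which beats $(n/(4k))^{k/2}$ for all $k$ beyond a small threshold; the finitely many small $k$ and the odd-$k$ book-keeping must then be checked by hand. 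Your alternative via the hypergeometric tail is the same computation in disguise and works only if the constant there is pinned down below $4/e$, which again requires the factorial-quality estimate. In short: right construction and correct combinatorics, but the estimates you actually commit to fall short by $2^{k/2}$, and the proof is not complete until the $(n/m)^{m}$ step is replaced.
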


To keep the argument simple, we restrict ourselves to $p=1$.
\begin{thm}\label{thm:entropy} Let $k$ and $n$ be positive integers. Then
$$
e_k(id:\ell_{1,\infty}^n\to \ell_1^n)\approx\begin{cases} \log(1+n/k),\quad 1\le k\le n,\\
2^{-\frac{k-1}{n}},\quad k\ge n,
\end{cases}
$$
where the constants of equivalence do not depend on $k$ and $n$.
\end{thm}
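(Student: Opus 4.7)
The theorem naturally splits into two regimes, $1\le k\le n$ and $k\ge n$, which I would attack by different techniques.

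\textbf{Regime $k\ge n$.} Here $\log(1+n/k)\approx 1$, so the sharp bound is $2^{-(k-1)/n}$, and both inequalities follow from the volume estimates of Theorem \ref{thm:asym:1}. For the lower bound I would use the standard volume comparison: a covering of $B_{1,\infty}^n$ by $2^{k-1}$ translates of $\varepsilon B_1^n$ forces
$$
\varepsilon^n\cdot 2^{k-1}\cdot\vol(B_1^n)\ge \vol(B_{1,\infty}^n),
$$
and hence $e_k\ge 2^{-(k-1)/n}\bigl(\vol(B_{1,\infty}^n)/\vol(B_1^n)\bigr)^{1/n}\gtrsim 2^{-(k-1)/n}$ by Theorem \ref{thm:asym:1}. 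For the upper bound I would invoke submultiplicativity with $m=n$,
$$
e_{n+k-1}(\mathrm{id}:\ell_{1,\infty}^n\to\ell_1^n)\le e_n(\mathrm{id}:\ell_{1,\infty}^n\to\ell_1^n)\cdot e_k(\mathrm{id}:\ell_1^n\to\ell_1^n),
$$
bounding the first factor by the $k=n$ instance of the other regime (where $\log(1+n/n)=\log 2=O(1)$) and the second factor by the classical $e_k(\mathrm{id}:\ell_1^n\to\ell_1^n)\lesssim 2^{-(k-1)/n}$.

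\textbf{Regime $1\le k\le n$, lower bound.} I would apply Lemma \ref{Lemma:coding} to produce $M\ge(n/4k)^{k/2}$ subsets $T_1,\dots,T_M$ of $\{1,\dots,n\}$ of size $k$ with pairwise intersections smaller than $k/2$. The indicators $v_i=k^{-1}\mathbf{1}_{T_i}$ lie in $B_{1,\infty}^n$ and satisfy $\|v_i-v_j\|_1\ge 1$, yielding $e_k\gtrsim 1$ as soon as $M>2^{k-1}$; this already suffices whenever $n/k$ is bounded. To reach the sharper bound $\log(1+n/k)$ in the range where $n/k$ is large, I would use a multi-scale packing on disjoint dyadic blocks $B_0,\dots,B_L\subset\{1,\dots,n\}$ with $|B_l|\approx k\cdot 2^l$ and $L\approx\log_2(n/k)$. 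On each block apply Lemma \ref{Lemma:coding} independently, and sum the normalized indicators with weights chosen so that the Lorentz quasi-norm stays bounded while the $\ell_1$-mass adds up to order $\log(n/k)$; the freedom at the coarsest scale supplies the required $>2^{k-1}$ codewords and the differences at that scale provide the $\ell_1$-separation.

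\textbf{Regime $1\le k\le n$, upper bound.} Every $x\in B_{1,\infty}^n$ is within $\ell_1$-distance
$$
\sum_{j>k}x_j^*\le\sum_{j=k+1}^n\frac{1}{j}\lesssim\log\Bigl(1+\frac{n}{k}\Bigr)
$$
of its best $k$-term approximation. The difficulty is that the combinatorial cost of encoding the support is $\log_2\binom{n}{k}\approx k\log(n/k)$ bits, whereas the budget is only $k-1$. I would exploit the slack in the radius by dyadically quantizing both the values and the support positions, grouping $k$-subsets of $\{1,\dots,n\}$ into equivalence classes of controlled $\ell_1$-diameter, so that one representative per class suffices and the encoding fits into $k-1$ bits at the price of an absolute constant in the radius.

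\textbf{Main obstacle.} The upper bound for $1\le k\le n$ is the delicate step: the slack $\log(1+n/k)$ in the radius is precisely what allows the combinatorial cost $k\log(n/k)$ to be absorbed into $k-1$ bits, but designing the dyadic aggregation of supports so that the $\ell_1$-error stays balanced across scales is the main technical issue. By contrast, both lower bounds rest on well-established tools (volume comparison and the coding lemma), and the upper bound for $k\ge n$ reduces routinely to the $k=n$ case together with the standard entropy decay of $\mathrm{id}:\ell_1^n\to\ell_1^n$.
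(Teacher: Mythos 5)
Your treatment of the regime $k\ge n$ is sound: the lower bound is exactly the paper's volume comparison, and the upper bound via submultiplicativity $e_{n+k-1}(id:\ell_{1,\infty}^n\to \ell_1^n)\le e_n(id:\ell_{1,\infty}^n\to \ell_1^n)\,e_k(id:\ell_1^n\to \ell_1^n)$ is a legitimate alternative to the paper's direct argument (a maximal $\varepsilon$-separated subset of $B_{1,\infty}^n$ counted by volumes). Note, however, that the paper's $k\le n$ upper bound itself consumes the $k\ge n$ packing bound in dimension $l$ (to produce a $c$-net of $B_{1,\infty}^l$ with $2^{\gamma l}$ points), so you will want the direct volume argument available in any case rather than routing everything through the $k=n$ instance of the other regime.

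Both halves of the regime $1\le k\le n$ have genuine gaps. For the upper bound your diagnosis is correct (the tail of a $k$-term approximation costs $\lesssim \log(1+n/k)$ in $\ell_1$, while encoding a $k$-element support costs about $k\log(n/k)$ bits), but the proposed fix --- coarsening the family of $k$-subsets into equivalence classes --- is not substantiated and is not what closes the argument. The paper's resolution is simpler: retain only $l\approx k/\log(n/k)$ coordinates instead of $k$. The tail error is then still $\log(n/l)\approx\log(n/k)$ up to constants, while the cost $\log_2\binom{n}{l}+\gamma l\lesssim l\log(en/l)\lesssim k$ fits the budget; no aggregation of supports is needed. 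For the lower bound, your single-scale packing correctly yields $e_k\gtrsim 1$, but the multi-scale construction as you describe it does not reach $\log(1+n/k)$: if the $>2^{k-1}$ codewords are generated by the freedom at the coarsest scale alone, two distinct codewords differ only on that one block, and since the $\ell_1$-mass per scale is $O(1)$ the separation is $O(1)$, not $\Omega(\log(n/k))$. You must couple the scales: the paper applies Lemma \ref{Lemma:coding} with subset size $4^l$ for every $\mu\le l\le\nu$, keeps the same number $M$ of sets at each scale, disjointifies $\widetilde T^l_j=T^l_j\setminus(T^{l-1}_j\cup\dots\cup T^\mu_j)$, and forms $x^j=\sum_l 4^{-l}\chi_{\widetilde T^l_j}$ with the \emph{same} index $j$ at every scale; then for $i\ne j$ the half-overlap condition forces a contribution $\gtrsim 1$ to $\|x^i-x^j\|_1$ from each of the $\nu-\mu+1\approx\log(n/k)$ scales, and these contributions add up. Without this coupling the construction only reproduces the constant lower bound.
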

\begin{proof}
\emph{Step 1. (lower bound for $k\ge n$):} If $B_{1,\infty}^n$ is covered by $2^{k-1}$ balls in $\ell_1^n$ with radius $\varepsilon>0$, it must hold
$$
\vol(B_{1,\infty}^n)^{1/n}\le 2^{\frac{k-1}{n}}\varepsilon\vol(B_1^n)^{1/n},
$$
which (in combination with Theorem \ref{thm:asym:1}) gives the lower bound for $k\ge n$.\\
\emph{Step 2. (upper bound for $k\ge n$):} We use again volume arguments.
Let $\varepsilon>0$ be a parameter to be chosen later on.
Let $\{x_1,\dots,x_N\}\subset B_{1,\infty}^n$
be a maximal $\varepsilon$-distant set in the metric of $\ell_1^n.$ This means that
$$
B_{1,\infty}^n\subset \bigcup_{j=1}^N (x_j+\varepsilon B_1^n)
$$
and $\|x_i-x_j\|_1>\varepsilon$ for $i\not =j.$ Hence, any time $N\le 2^{k-1}$ for some positive integer $k$, then
$e_k(id:\ell_{1,\infty}^n\to \ell_1^n)\le \varepsilon.$ To estimate $N$ from above, let us note that $(x_j+\varepsilon B_{1}^n)\subset 2(1+\varepsilon)B_{1,\infty}^n$,
which follows by the quasi-triangle inequality for $\ell_{1,\infty}^n$. On the other hand, the triangle inequality of $\ell_1^n$ implies that
$(x_j+\frac{\varepsilon}{2}B_1^n)$ are mutually disjoint. Hence,
$$
N\Bigl(\frac{\varepsilon}{2}\Bigr)^n\vol(B_1^n)\le 2^n(1+\varepsilon)^n\vol(B_{1,\infty}^n),
$$
i.e.
\begin{equation}\label{eq:entropy:1}
N\le 4^n\Bigl(1+\frac{1}{\varepsilon}\Bigr)^n\frac{\vol(B_{1,\infty}^n)}{\vol(B_{1}^n)}\le 
\frac{8^n}{\varepsilon^n}\frac{\vol(B_{1,\infty}^n)}{\vol(B_{1}^n)}
\end{equation}
if $0<\varepsilon<1.$ We now define the parameter $\varepsilon$ by setting the right-hand side of \eqref{eq:entropy:1} equal to $2^{k-1}$.
By Theorem \ref{thm:asym:1}, there exists an integer $\gamma\ge 1$, such that
$\varepsilon<1$ for $k\ge \gamma n$. In this way, we get $N\le 2^{k-1}$
and $\varepsilon=8 [\vol(B_{1,\infty}^n)/\vol(B_{1}^n)]^{1/n}\cdot 2^{-\frac{k-1}{n}}\le c\,2^{-\frac{k-1}{n}}$. This gives the result
for $k\ge \gamma n.$

\emph{Step 3. (upper bound for $k\le n$):} Let $1\le l\le n/2$ be a positive integer, which we will chose later on.
To every $x\in B_{1,\infty}^n$, we associate $S\subset\{1,\dots,n\}$ to be the indices of its $l$ largest entries (in absolute value).
Furthermore, $x_S\in\R^n$ denotes the restriction of $x$ to $S$. 
We know that
$$
\|x-x_S\|_1\le \sum_{k=l+1}^n \frac{1}{k}\le \int_{l}^{n}\frac{dx}{x}=\log(n)-\log(l)=\log(n/l).
$$
By Step 2, there is an absolute constant $c>0$ (independent of $l$), such that
$$
e_{\gamma l}(id:\ell_{1,\infty}^l\to \ell_1^l)< c,
$$
where $\gamma\ge 1$ is the integer constant from Step 2.

Hence, there is a point set ${\mathcal N}\subset \R^l$, with $|{\mathcal N}|=2^{\gamma l}$,
which is a $c$-net of $B_{1,\infty}^l$ in the $\ell_1^l$-norm. For any set $S$ as above, we embed ${\mathcal N}$ into $\R^n$ by extending the points from
${\mathcal N}$ by zero outside of $S$ and obtain a point set ${\mathcal N}_S$, which is a $c$-net of 
$\{x\in B_{1,\infty}^n:\supp(x)\subset S\}$. Taking the union of all these nets over all sets $S\subset \{1,\dots,n\}$ with $|S|=l$,
we get $2^{\gamma l}{n\choose l}$ points,
which can approximate any $x\in B_{1,\infty}^n$ within $c+\log(n/l)$
in the $\ell_1^n$-norm.

We use the elementary estimate ${n\choose l}\le (en/l)^l$ and assume (without loss of generality) that $\gamma\ge 2$.
Then we may conclude, that whenever $2^{k-1}\ge 2^{\gamma l}{n\choose l}$, we have $e_k(id:\ell_{1,\infty}^n\to\ell_{1}^n)\le c+\log(n/l)$, i.e.
$$
k-1\ge \gamma l(1+\log(en/l))\implies e_k(id:\ell_{1,\infty}^n\to\ell_{1}^n)\lesssim (1+\log(n/l)).
$$
By a standard technical argument, $l$ can be chosen up to the order of $k/\log(n/k)$, which gives the result
for $n$ large enough and $k$ between $(\gamma+1)\log(en)$ and $n$. Using monotonicity of entropy numbers, the upper bound from Step 2
and the elementary bound $e_k(id:\ell_{1,\infty}^n\to\ell_{1}^n)\le \|id:\ell_{1,\infty}^n\to\ell_{1}^n\|\le 1+\log(n)$ concludes the proof of the upper bounds.

\emph{Step 4. (lower bound for $k\le n$):} Let $n$ be a sufficiently large positive integer and let $\nu\ge 1$ be the largest integer with $12\cdot 4^\nu\le n$.
Let $1\le\mu\le \nu$ be a positive integer.

We apply Lemma \ref{Lemma:coding} with $k$ replaced by $4^l$ for every integer $l$ with $\mu\le l \le \nu$.
In this way, we obtain a system of subsets $T^l_1,\dots,T^l_{M_l}$ of $\{1,\dots,n\}$, such that
$|T^l_i|=4^l$ for every $1\le i \le M_l$, $|T^l_i\cap T^l_j|<4^l/2$ for $i\not=j$
and
$$
\displaystyle M_l\ge \Bigl(\frac{n}{4^{l+1}}\Bigr)^{4^l/2}\ge M:=\Bigl(\frac{n}{4^{\mu+1}}\Bigr)^{4^\mu/2}.
$$

For $j\in\{1,\dots,M\}$, we put
\begin{align*}
\widetilde T^\mu_j&=T^\mu_j,\\
\widetilde T^{\mu+1}_j&=T^{\mu+1}_j\setminus T^{\mu}_j,\\
&\vdots\\
\widetilde T^{\nu}_j&=T^{\nu}_j\setminus (T^{\nu-1}_j\cup\dots\cup T^{\mu}_j).
\end{align*}
Observe, that by this construction the sets $\{\widetilde T^{l}_j:\mu\le l\le \nu\}$ are mutually disjoint and $|\widetilde T^l_j|\le |T^l_j|=4^l$.
Furthermore, $|\widetilde T^\mu_j|=4^\mu$ and
\begin{align}
\notag |\widetilde T^l_j|&\ge |T^{l}_j|- [|T^{l-1}_j|+\dots+|T^{\mu}_j|]=4^l-[4^{l-1}+\dots+4^\mu]\\
\label{eq:entro:11}&\ge 4^l\Bigl(1-\sum_{s=1}^\infty\frac{1}{4^s}\Bigr)=\frac{2}{3}\cdot 4^l
\end{align}
for $\mu<l\le\nu.$

We associate to the sets $\{\widetilde T_j^l:\mu\le l\le \nu, 1\le j\le M\}$ a system of vectors $x^1,\dots,x^M\in{\mathbb R}^n$.
First, we observe that if $u\in\{1,\dots,n\}$ belongs to $\widetilde T^l_j$ for some $l\in\{\mu,\mu+1,\dots,\nu\}$, then this $l$ is unique and we put $(x^j)_u=\frac{1}{4^l}.$
Otherwise, we set $(x^j)_u=0.$ We may also express this construction by
$$
x^j=\sum_{l=\mu}^{\nu}\frac{1}{4^l}\chi_{\widetilde T_j^l},
$$
where $\chi_A$ is the indicator function of a set $A$.

Now we observe that
\begin{align*}
\|x^j\|_{1,\infty}&\le \max\Bigl\{4^\mu\cdot\frac{1}{4^\mu}, \frac{4^{\mu}+4^{\mu+1}}{4^{\mu+1}},
\dots,\frac{4^{\mu}+4^{\mu+1}+\dots+4^\nu}{4^{\nu}}\Bigr\}\\
&\le 1+\frac{1}{4}+\frac{1}{4^2}+\dots=\frac{4}{3}.
\end{align*}
Furthermore, let $i\not=j$ and  let $u\in \widetilde T^l_i$ with $u\not\in\widetilde T^l_j$. Then
\begin{equation}\label{eq:entro:12}
|(x^i)_u-(x^j)_u|\ge \frac{1}{4^l}-\frac{1}{4^{l+1}}=\frac{3}{4}\cdot\frac{1}{4^l}.
\end{equation}
To estimate the $\ell_1$-distances among the points $\{x^1,\dots,x^M\}$, we combine \eqref{eq:entro:12}, \eqref{eq:entro:11}, 
and obtain for $i\not= j$
\begin{align*}
\|x^i-x^j\|_1&\ge\sum_{l=\mu}^\nu \sum_{u\in\widetilde T_i^l\setminus \widetilde T^l_j}|(x^i)_u-(x^j)_u|
\ge \sum_{l=\mu}^\nu |\widetilde T^l_i\setminus \widetilde T^l_j|\cdot \frac{3}{4}\cdot\frac{1}{4^l}\\
&=\frac{3}{4}\Bigl\{\sum_{l=\mu}^\nu |\widetilde T^l_i|\cdot \frac{1}{4^l}-\sum_{l=\mu}^\nu |\widetilde T^l_i \cap \widetilde T^l_j|\cdot \frac{1}{4^l}\Bigr\}\\
&\ge \frac{3}{4}\Bigl\{1+\sum_{l=\mu+1}^\nu \frac{2}{3}\cdot 4^l\cdot\frac{1}{4^l}
-\sum_{l=\mu}^\nu |T^l_i \cap T^l_j|\cdot \frac{1}{4^l}\Bigr\}\\
&\ge \frac{3}{4}\Bigl\{1+\frac{2}{3}(\nu-\mu)
-\sum_{l=\mu}^\nu \frac{4^l}{2}\cdot \frac{1}{4^l}\Bigr\}\\
&= \frac{3}{4}\Bigl\{1+\frac{2}{3}(\nu-\mu)-\frac{1}{2}(\nu-\mu+1)\Bigr\}\ge \frac{1}{8}(\nu-\mu+1).
\end{align*}
We conclude, that the points $\{x^j:j=1,\dots,M\}$ satisfy
$$
\|x^j\|_{1,\infty}\le \frac{4}{3}\quad \text{and}\quad  \|x^i-x^j\|_1\ge \frac{1}{8}(\nu-\mu+1)\ \text{for}\ i\not=j.
$$
It follows that if a positive integer $k$ satisfies
\begin{equation}\label{eq:entropy:2}
M=\Bigl(\frac{n}{4^{\mu+1}}\Bigr)^{4^\mu/2}\ge 2^{k-1},
\end{equation}
then
$$
e_k(id:\ell_{1,\infty}^n\to\ell_1^n)\ge c(\nu-\mu+1),
$$
where the absolute constant $c$ can be taken $c=\frac{3}{64}.$

Let now $n\ge 200$ and $1\le k\le n/200$ be positive integers. Then we chose $\nu\ge 1$ to be the largest integer with $12\cdot 4^\nu\le n$
and let $\mu\ge 1$ be the smallest integer with $k\le 4^\mu/2.$
Due to $\frac{n}{4^{\mu+1}}\ge 2$, this choice ensures \eqref{eq:entropy:2} and
$$
\nu-\mu+1\ge \log_4\Bigl(\frac{n}{48}\Bigr)-\log_4(2k)\gtrsim \log(1+n/k).
$$
The remaining pairs of $k$ and $n$ are covered by monotonicity of entropy numbers at the cost of constants of equivalence.
\end{proof}

{\bf Acknowledgement:} We would like to thank Franck Barthe for proposing the problem to us
and to Leonardo Colzani and Henning Kempka for valuable discussions.


\begin{thebibliography}{99}
\bibitem{AS} M. Abramowitz and I. A. Stegun, Handbook of mathematical functions with formulas, graphs, and mathematical tables,
U.S. Government Printing Office, Washington, D.C. 1964
\bibitem{LorTheo2} Z. Altshuler, P. G. Casazza, and B. L. Lin, \emph{On symmetric basic sequences in Lorentz sequence spaces}, Israel J. Math., 15(2) (1973), 140--155
\bibitem{LorTheo1} Z. Altshuler, \emph{Uniform convexity in Lorentz sequence spaces}, Israel J. Math., 20(3) (1975), 260--274
\bibitem{BS} C. Bennett and R. Sharpley, Interpolation of operators, Academic Press, Boston, 1988
\bibitem{BL} J. Bergh and J. L\"ofstr\"om, Interpolation spaces. An Introduction, Springer, Berlin, 1976
\bibitem{BCKV} H. Boche, R. Calderbank, G. Kutyniok, and J. Vyb\'\i ral, \emph{A survey of compressed sensing}, in: Compressed Sensing and its Applications, Birkh\"auser, Boston, 2015
\bibitem{IEEE1} E. J. Cand\`es and T. Tao, \emph{Near-optimal signal recovery from random projections: universal encoding strategies?},
IEEE Trans. Inform. Theory 52 (12) (2006), 5406--5425
\bibitem{CDDD} A. Cohen, W. Dahmen, I. Daubechies, and R. A. DeVore, \emph{Harmonic analysis of the space BV}, Rev. Mat. Iberoamericana 19 (1) (2003), 235--263
\bibitem{DeVore} R.A. DeVore, \emph{Nonlinear approximation}, Acta Numerica 7 (1998), 51--150
\bibitem{DeLo} R. A. DeVore and G. G. Lorentz, Constructive approximation, Springer, Berlin, 1993
\bibitem{DePeTe} R. A. DeVore, G. Petrova, and V. Temlyakov, \emph{Best basis selection for approximation in $L_p$}, Found. Comput. Math. 3 (2) (2003), 161--185
\bibitem{Dirichlet} P. G. L. Dirichlet, \emph{Sur une nouvelle m\'ethode pour la d\'etermination des int\'egrales multiples}, J. Math. Pures Appl. 4 (1839), 164--168
\bibitem{IEEE2} D. L. Donoho, \emph{Compressed sensing}, IEEE Trans. Inform. Theory 52 (4) (2006), 1289--1306
\bibitem{EN} D. E. Edmunds and Yu. Netrusov, \emph{Entropy numbers and interpolation}, Math. Ann. 351 (4) (2011), 963--977
\bibitem{ET} D. E. Edmunds and H. Triebel, Function spaces, entropy numbers and differential operators, Cambridge University Press, Cambridge, 1996
\bibitem{FPRU} S. Foucart, A. Pajor, H. Rauhut, and T. Ullrich, \emph{The Gelfand widths of $\ell_p$-balls for $0<p\le 1$}, J. Compl. 26 (6) (2010), 629--640
\bibitem{FR} S. Foucart and H. Rauhut, A mathematical introduction to compressive sensing, Birkh\"auser, Boston, 2013
\bibitem{coding:1} R. Graham and N. Sloane, \emph{Lower bounds for constant weight codes}, IEEE Trans. Inform. Theor. 26 (1) (1980), 37--43
\bibitem{Graf1} L. Grafakos, Classical Fourier analysis, Springer, New York, 2008
\bibitem{Graf2} L. Grafakos, Modern Fourier analysis, Springer, New York, 2009
\bibitem{GL} O. Gu\'edon and A. E. Litvak, \emph{Euclidean projections of a $p$-convex body}, in: Geometric aspects of functional analysis,
Springer, Berlin, 2000
\bibitem{Haroske} D. D. Haroske, Envelopes and sharp embeddings of function spaces, Chapman \& Hall/CRC, Boca Raton, 2007
\bibitem{Hunt1} R. A. Hunt, \emph{An extension of the Marcinkiewicz interpolation theorem to Lorentz spaces}, Bull. Amer. Math. Soc. 70 (1964), 803--807
\bibitem{Hunt2} R. A. Hunt, \emph{On $L(p,q)$ spaces}, L'Enseignement Math. 12(2) (1966), 249--275
\bibitem{LorTheo5} M. Kato and L. Maligranda, \emph{On James and Jordan-von Neumann constants of Lorentz sequence spaces},
J. Math. Anal. Appl. 258 (2) (2001), 457--465
\bibitem{KV} M. Kossaczk\'a and J. Vyb\'iral, \emph{Entropy numbers of finite-dimensional embeddings}, to appear in Expo. Math.
\bibitem{K2001} T. K\"uhn, \emph{A lower estimate for entropy numbers}, J. Approx. Theory 110 (1) (2001), 120--124
\bibitem{LR} P. G. Lemari\'e-Rieusset, Recent developments in the Navier-Stokes problem, Chapman \& Hall/CRC, Boca Raton, 2002
\bibitem{LorTheo3} J. Lindenstrauss and L. Tzafriri, Classical Banach spaces. I. Sequence spaces, Springer, Berlin, 1977
\bibitem{LiPe} J.-L. Lions and J. Peetre, \emph{Sur une classe d'espaces d'interpolation}, Inst. Hautes \'Etudes Sci. Publ. Math. 19 (1964), 5--68
\bibitem{Lor1} G. G. Lorentz, \emph{Some new functional spaces}, Ann. of Math. 51 (1950), 37--55
\bibitem{Lor2} G. G. Lorentz, \emph{On the theory of spaces $\Lambda$}, Pacific J. Math. 1 (1951), 411--429
\bibitem{LorTheo4} G. G. Lorentz, \emph{Relations between function spaces}, Proc. Amer. Math. Soc. 12 (1961), 127--132
\bibitem{Mar} J. Marcinkiewicz, \emph{Sur l'interpolation d'op\'erations}, C. R. Acad. Sci. Paris 208 (1939), 1272--1273
\bibitem{Meyer} Y. Meyer, \emph{Wavelets, paraproducts, and Navier-Stokes equations}, in: Current developments in mathematics 1996, pp. 105--212,
Int. Press, Boston, 1997
\bibitem{coding:2} N. Noam and W. Avi, \emph{Hardness vs randomness}, J. Comput. Syst. Sci. 49 (2) (1994), 149--167
\bibitem{Pietsch1} A. Pietsch, Operator ideals, North-Holland, Amsterdam, 1980
\bibitem{Pietsch2} A. Pietsch, Eigenvalues and $s$-numbers, Cambridge University Press, Cambridge, 1987
\bibitem{Pisier} G. Pisier, The volume of convex bodies and Banach space geometry, Cambridge University Press, Cambridge, 1999
\bibitem{ST} H.-J. Schmeisser and H. Triebel, Topics in Fourier analysis and function spaces, Wiley-Interscience, Chichester, 1987
\bibitem{S} C. Sch\"utt, \emph{Entropy numbers of diagonal operators between symmetric Banach spaces}, J. Approx. Theory 40 (2) (1984), 121--128
\bibitem{Triebel2} H. Triebel, \emph{\"Uber die Verteilung der Approximationszahlen kompakter Operatoren in
Sobolev–Besov–R\"aumen}, Invent. Math. 4 (1967) 275--293
\bibitem{Triebel1} H. Triebel, Interpolation theory, function spaces, differential operators, North-Holland, Amsterdam-New York, 1978
\bibitem{WiW} E. T. Whittaker and G. N. Watson, \emph{A course of modern analysis}, Cambridge University Press, Cambridge, 1950
\end{thebibliography}
\end{document}